\topskip \setlength{\parskip}{0pt plus 0pt minus 0pt}
\numberwithin{equation}{section}
\newtheorem{theorem}{Theorem}[section]
\newtheorem{proposition}[theorem]{Proposition}
\newtheorem{lemma}[theorem]{Lemma}
\newenvironment{proof}
   {\par\noindent{\it Proof.}\hskip 0.5em\ignorespaces}
   {\hfill \par\medskip}
\begin{document}
\begin{CJK*}{UTF8}{gkai}
\parskip 7pt

\pagenumbering{arabic}
\def\sof{\hfill\rule{2mm}{2mm}}
\def\ls{\leq}
\def\gs{\geq}
\def\SS{\mathcal S}
\def\qq{{\bold q}}
\def\MM{\mathcal M}
\def\FF{\mathcal F}
\def\EE{\mathcal E}
\def\lsp{\mbox{lsp}}
\def\rsp{\mbox{rsp}}
\def\pf{\noindent {\it Proof.} }
\def\mp{\mbox{pyramid}}
\def\mb{\mbox{block}}
\def\mc{\mbox{cross}}
\def\qed{\hfill \rule{4pt}{7pt}}
\def\pf{\noindent {\it Proof.} }
\textheight=22cm

\begin{center}
{\Large\bf A proof of the only mode of a unimodal sequence}
\end{center}
\begin{center}
Jun Wan, Zuo-Ru Zhang \\

School of Mathematical Science\\
Hebei Normal University\\
 Shijiazhuang 050024, P. R. China\\[6pt]
\end{center}

\noindent {\bf Abstract.}
In study the generalized Jacobsthal and Jaco-Lucas polynomials, Sun introduced the interesting numerical triangle Jaco-Lucas sequence $\{JL_{n,k}\}_{n\geq k\geq0}$. In this paper, we proved this sequence is log-concave with the only  mode by computer algebra.

\noindent {\bf Keywords}: unimodality, log-concavity, cylindrical algebraic decomposition

\section{Introduction}

The study of unimodality of combinatorial sequences has drawn great attention in recent decades see the survey of Stanley \cite{PS1989} and Brenti \cite{FB1994,FB1989}.
Specially  sequences with  finite order recurrences \cite{hou2019,hou2021,Mao2023,Zhu2020} gets a lot of attentions.
Proving unimodality of sequence is always difficult, especially given the mode of the sequence as was wrote by Stanely in \cite{PS1989}.
In study a class of polynomial sequences with a three term recurrence, Sun\cite{Sun1975} introduced several numerical triangles. Among these sequences we find the  Jaco-Lucas sequence $\{JL_{n,k}\}_{n\geq k\geq0}$ satisfy a three term recurrence with  quadratic coefficients.

It is well known that  for a sequence $\mathcal{A}=\{a_k\}_{k=0}^n$  of positive numbers, we have

\noindent$\bullet$ $\mathcal{A}$ is unimodal if there is an index $0\leq j\leq n$ such that
\begin{equation*}
    a_0\leq a_1\leq \dots \leq a_j\geq a_{j+1}\geq \dots \geq a_n.
\end{equation*}

\noindent$\bullet$ $\mathcal{A}$ is log-concave if
\begin{equation*}
    a_j^2\geq a_{j-1}a_{j+1},~~~~for~all~1\leq j< n.
\end{equation*}

The Jaco-Lucas sequence $\{JL_{n,k}\}_{n\geq k\geq0}$ defined as
\begin{equation}\label{1.1}
    JL_{n,k}=\sum_{i=k}^{\lfloor\frac{n}{2}\rfloor}\frac{n}{n-i}\binom{n-i}{i}\binom{i}{k}.
\end{equation}
also has a combinatorial interpretation given by Sun\cite{Sun1975}  as  the number of $(0,1,2)$-sequences of length $n$ with $k$ $2$'$s$, but without subsequences $11$, $12$, $21$, $22$, where the first and last components of the sequences are considered to be adjacent .

Inspired by Kauers and Paule\cite{KP2007} we proved that the Jaco-Lucas sequeces $\{JL_{n,k}\}_{n\geq k\geq0}$ also can be find as A245962 \cite{NJ} in OEIS is log-concavity,  at the same time we give its only mode by cylindrical algebraic decomposition (CAD) \cite{EC1975}. Specially we prove that  the sequence is strictly increasing in the left of the mode.

By observation of the first few values of this sequence we conjecture that  the sequence $\{JL_{n,k}\}_{k\geq0}$ is unimodal with mode $k^*=\lfloor \frac{n-4}{6} \rfloor+1$. The values of $JL_{n,k}$ for small $n$ and $k$ can be found in Table 1.
\begin{table}[t]
    \centering
    \caption{The values of $JL_{n,k}$ for $1\leq n\leq 18$ and $0\leq k\leq \lfloor n/2 \rfloor$.}
    \label{tab:my_label}
    \begin{tabular}{*{11}{c}}
    \toprule
     &\multicolumn{10}{c}{k}\\
      \cmidrule(lr){2-11}
    n&0&1&2&3&4&5&6&7&8&9\\ \midrule
     1	&1\\
     2	&3	&2\\
     3	&4	&3\\
     4	&7	&8	&2\\
     5	&11	&15	&5\\
     6	&18	&30	&15	&2\\
     7	&29	&56	&35	&7\\
     8	&47	&104	&80	&24	&2\\
     9	&76	&189	&171	&66	&9\\
     10	&123	&340	&355	&170	&35	&2\\
     11	&199	&605	&715	&407	&110	&11\\
     12	&322	&1068	&1410	&932	&315	&48	&2\\
     13	&521	&1872	&2730	&2054	&832	&169	&13\\
     14	&843	&3262	&5208	&4396	&2079	&532	&63	&2\\
     15	&1364	&5655	&9810	&9180	&4965	&1533	&245	&15\\
     16	&2207	&9760	&18280	&18784	&11440	&4144	&840	&80	&2\\
     17	&3571	&16779	&33745	&37774	&25585	&10642	&2618	&340	&17\\
     18	&5778	&28746	&61785	&74838	&55809	&26226	&7602	&1260	&99	&2\\
    \bottomrule
    \end{tabular}
\end{table}
We confirm the conjecture and give the main theorem below.
\begin{theorem}\label{thm1.2} For any positive integers $n$ and $k$ with $n\geq 2k> 0$, $JL_{n,k}$ is log-concavity and strictly increasing before the only mode $k^*=\lfloor \frac{n-4}{6} \rfloor+1$.
\end{theorem}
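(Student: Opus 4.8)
The plan is to follow the Kauers--Paule paradigm: replace the explicit double sum by a pure recurrence in $k$, recast all three assertions as statements about the consecutive-term ratio, and discharge the resulting polynomial inequalities by cylindrical algebraic decomposition (CAD). Since the summand $\frac{n}{n-i}\binom{n-i}{i}\binom{i}{k}$ in (\ref{1.1}) is hypergeometric in $i$ and $k$, creative telescoping (Zeilberger's algorithm) yields, for fixed $n$, a three-term recurrence $a(n,k)\,JL_{n,k+1}+b(n,k)\,JL_{n,k}+c(n,k)\,JL_{n,k-1}=0$ with polynomial coefficients that are quadratic in $k$, matching the ``quadratic coefficients'' noted in the introduction. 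Every summand is positive on $0\le k\le\lfloor n/2\rfloor$, so each $JL_{n,k}>0$ and the ratio $r_k:=JL_{n,k+1}/JL_{n,k}$ is positive and well defined; dividing the recurrence by $JL_{n,k}$ turns it into a Riccati-type iteration $r_k=-b(n,k)/a(n,k)-\bigl(c(n,k)/a(n,k)\bigr)/r_{k-1}$. In this language, log-concavity is $r_{k-1}\ge r_k$, strict increase before the mode is $r_k>1$ for $0\le k\le k^*-1$, and uniqueness of the mode additionally demands $r_{k^*}\le 1$.

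The heart of the argument is to exhibit an explicit rational function $g(n,k)$, strictly decreasing in $k$, that sandwiches the ratio, $g(n,k+1)\le r_k\le g(n,k)$, and to prove this sandwich by induction on $k$ through the Riccati iteration. Feeding the inductive hypothesis into the iteration and clearing the positive denominators reduces each half of the step to a polynomial inequality in the two variables $n,k$ over the semialgebraic region $\{\,n\ge 2k>0\,\}$, which CAD decides. The sandwich immediately delivers $r_{k-1}\ge g(n,k)\ge r_k$, i.e.\ log-concavity. Moreover, if $g$ is calibrated so that it crosses $1$ strictly between $k^*$ and $k^*+1$, that is $g(n,k^*)>1>g(n,k^*+1)$ (itself a polynomial comparison for CAD), then $r_k\ge g(n,k+1)\ge g(n,k^*)>1$ for $k\le k^*-1$ gives the strict increase, while $r_k\le g(n,k)\le g(n,k^*+1)<1$ for $k\ge k^*+1$ gives the decrease afterwards.

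The sandwich deliberately leaves the single value $r_{k^*}$ undetermined, so confirming that $k^*$ is the \emph{only} mode reduces to one boundary inequality, $JL_{n,k^*+1}\le JL_{n,k^*}$, i.e.\ $r_{k^*}\le 1$. Because $k^*=\lfloor\frac{n-4}{6}\rfloor+1$ contains a floor, I would substitute $n=6m+\rho$ with $\rho\in\{0,1,2,3,4,5\}$, so that $k^*$ becomes linear in $m$ within each residue class, and then verify the six resulting polynomial inequalities in $m$ (equivalently in $n$) by CAD. The finitely many small-$k$ base cases of the Step-two induction and the terminal entries near $k=\lfloor n/2\rfloor$, where the last values are as small as $2$, are checked by direct computation.

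The genuinely difficult and creative step is the discovery of $g(n,k)$: it must be tight enough that its crossing point coincides with $k^*$, yet simple enough that the induction closes and the CAD cell decompositions remain computable in practice. The floor in $k^*$ is what forces the finite mod-$6$ case analysis, and the ambiguous crossing value $r_{k^*}$ cannot be settled by the generic bound and must instead be pinned by the separate, sharper estimate of the preceding paragraph. I expect this boundary inequality at the crossing, rather than the bulk monotonicity, to be where the main effort lies.
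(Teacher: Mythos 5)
You have a genuine gap, and it sits exactly where you predicted the difficulty would be: the interlacing function $g(n,k)$ with $g(n,k+1)\le r_k\le g(n,k)$ is never exhibited, and its existence with the required tightness is the hard content of the theorem, not a routine calibration. Since $k^*=\lfloor\frac{n-4}{6}\rfloor+1$, your condition $g(n,k^*)>1>g(n,k^*+1)$ forces $g$ to track the true crossing point of the ratio $r_k$ to within less than one unit uniformly in $n$, across all six residue classes --- far sharper than what merely closing the Riccati induction demands, and there is no a priori guarantee a single rational $g$ does both. The paper's own experience indicates how delicate such bounds are: even for the $n$-direction ratio $JL_{n+1,k}/JL_{n,k}$ it needs two different lower bounds on complementary $k$-ranges (the rational $L(n,k)$ of Lemma 2.2 for $k\le k_1$ or $k\ge k_2$, and the algebraic $l(n,k)$ of Lemma 2.3 on $k_1<k<k_2$), a restriction to $n\ge 19$, and finite checking of Table 1. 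Note too that the paper proves log-concavity by a different reduction: it uses the mixed recurrences (2.1)--(2.2) to write $JL_{n,k}^2-JL_{n,k-1}JL_{n,k+1}$ as a positive multiple of a quadratic $g_{n,k}(x)$ in $x=JL_{n+1,k}/JL_{n,k}$, then bounds that $n$-ratio by induction on $n$ via (2.3), rather than sandwiching the $k$-ratio as you propose.

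The second, sharper gap is your treatment of the boundary. You claim that after substituting $n=6m+\rho$ the inequality $r_{k^*}\le 1$ becomes one of ``six polynomial inequalities in $m$'' decidable by CAD. It does not: $\triangle(n,k^*)=JL_{n,k^*+1}-JL_{n,k^*}$ is a value of a P-recursive sequence, not a polynomial or rational function of $m$, and CAD decides only semialgebraic statements --- it cannot certify the sign of such a sequence directly. What is actually required, and what the paper supplies, is a second round of creative telescoping \emph{along the diagonals}: Zeilberger's algorithm produces three-term recurrences in $k$ for $A(k)=\triangle(6k+4,k)$ and $B(k)=\triangle(6k+3,k)$ (Lemma 3.5 and Theorem 3.7), and CAD is then applied only to the polynomial coefficients of those recurrences to certify the induction steps, anchored at $A(0)=1$ and $B(0)=-1$. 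The paper also sidesteps your mod-$6$ case split entirely by converting the floor into the linear dichotomy $\triangle(n,k)\ge 0$ for $n\ge 6k+4$ versus $\triangle(n,k)\le 0$ for $2k\le n\le 6k+3$, inducting on $n$ (Lemma 3.4) from the two diagonal base cases. Until you exhibit $g$ and replace the CAD claim at the crossing with a diagonal-recurrence argument of this kind, your proposal is a program rather than a proof.
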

\section{The proof of log-concavity}

In this section we prove that the sequence $\{JL_{n,k}\}_{k\geq 0}$ is log-concavity with the help of packages from RISC. Based on the recurrence relations for $JL_{n,k}$,  we give the bound for the ratio $\frac{JL_{n,k+1}}{JL_{n,k}}$.

\subsection{Recurrence relations of $JL_{n,k}$}

\begin{lemma}\label{lem2.1}
For any positive integers $n$ and $k$ with $n\geq 2k$, we have
\begin{equation}\label{2.1}
    \begin{split}
    &JL_{n,k-1}=\frac{kn}{(n+1)(n-2k+2)}JL_{n+1,k}+\frac{2k}{n-2k+2}JL_{n,k},
    \end{split}
\end{equation}
\begin{equation}\label{2.2}
    JL_{n,k+1}=-\frac{n(n-2k+1)}{5(n+1)(k+1)}JL_{n+1,k}+\frac{3n-5k}{5(k+1)}JL_{n,k},
\end{equation}
\begin{equation}\label{2.3}
    JL_{n+2,k}=\frac{(n-k+1)(n+2)}{(n+1)(n-2k+2)}JL_{n+1,k}+\frac{n+2}{n-2k+2}JL_{n,k}.
\end{equation}
\end{lemma}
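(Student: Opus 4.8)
The plan is to treat the three identities as instances of creative telescoping applied to the summand
\[
F(n,k,i)=\frac{n}{n-i}\binom{n-i}{i}\binom{i}{k},
\]
which is a proper hypergeometric term in the three integer variables $n,k,i$: every shift $F(n+a,k+b,i+c)$ is a rational multiple of $F(n,k,i)$. Consequently $JL_{n,k}$ is P-recursive in $(n,k)$, and Zeilberger's algorithm (the RISC \texttt{fastZeil}/\texttt{HolonomicFunctions} machinery in the spirit of Kauers and Paule referenced in the introduction) is guaranteed to produce, for a prescribed pattern of shifts in $(n,k)$, rational coefficients $c_j(n,k)$ together with a rational certificate $R(n,k,i)$ such that
\[
\sum_j c_j(n,k)\,F(n+a_j,k+b_j,i)=\Delta_i\bigl[R(n,k,i)\,F(n,k,i)\bigr],
\]
where $\Delta_i g(i)=g(i+1)-g(i)$. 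Summing over $i$ collapses the right-hand side to boundary terms, and the surviving left-hand side is exactly a linear relation among shifted values of $JL_{n,k}$.

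First I would fix the shift pattern for each target. For \eqref{2.3} the pattern is $\{(n+2,k),(n+1,k),(n,k)\}$, a pure recurrence in $n$, which is the textbook output of Zeilberger's algorithm in the variable $n$ and which I expect to return precisely a second-order recurrence. For \eqref{2.1} and \eqref{2.2} the pattern is the mixed, contiguous set $\{(n+1,k),(n,k),(n,k\mp1)\}$; these are extracted either from the multivariate annihilating ideal of $JL_{n,k}$ or, equivalently, by running creative telescoping with shifts allowed in both $n$ and $k$. In each case, once the $c_j$ and the certificate $R$ are in hand, the identity is verified by dividing the telescoping equation through by $F(n,k,i)$: since all shifts of $F$ are rational multiples of $F$, this reduces everything to a single rational-function identity in $n,k,i$, which clears denominators to a polynomial identity checkable in finitely many steps. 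Matching the resulting $c_j(n,k)$ against the coefficients displayed in \eqref{2.1}–\eqref{2.3} completes the algebraic part.

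The delicate point, and the one I expect to be the main obstacle, is the treatment of the summation range. The sum runs over $k\le i\le\lfloor n/2\rfloor$, an interval whose upper endpoint depends on the parity of $n$ and therefore changes under the shift $n\mapsto n+1$ appearing in all three relations. Telescoping leaves boundary contributions at the lower limit $i=k$, at the upper limit $i=\lfloor n/2\rfloor$, and at the shifted limits. I would argue these all vanish: the factor $\binom{i}{k}$ forces the summand to be zero for $i<k$, while $\binom{n-i}{i}$ forces it to be zero for $i>n/2$, so both $F$ and $R\cdot F$ vanish just outside the range for every $n$ and both parities. Checking that the chosen certificate $R$ neither reintroduces a pole at $i=n$ (where the prefactor $n/(n-i)$ is singular) nor fails to vanish at the moving upper endpoint is the careful bookkeeping that makes the telescoped boundary terms disappear, yielding the stated equalities with no correction terms.

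As an independent safeguard against a mis-specified automated certificate, I would also confirm the recurrences by the elementary identity
\[
\frac{n}{n-i}\binom{n-i}{i}=\binom{n-i}{i}+\binom{n-i-1}{i-1},
\]
which rewrites $JL_{n,k}=\sum_i\bigl[\binom{n-i}{i}+\binom{n-i-1}{i-1}\bigr]\binom{i}{k}$ as a sum of ordinary binomial convolutions. Repeated use of Pascal's rule $\binom{m}{j}=\binom{m-1}{j}+\binom{m-1}{j-1}$ on the $n$- and $k$-dependent factors, together with the absorption identity $k\binom{i}{k}=i\binom{i-1}{k-1}$, produces the same three relations and verifies that the rational coefficients are exactly those of \eqref{2.1}–\eqref{2.3}.
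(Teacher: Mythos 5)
Your proposal is correct and takes essentially the same route as the paper: the paper also proves all three relations by WZ-style creative telescoping on the summand $\frac{n}{n-i}\binom{n-i}{i}\binom{i}{k}$, obtaining \eqref{2.3} directly as a second-order recurrence in $n$ via \texttt{fastZeil}, exactly as you describe. The only difference is one of routing: rather than requesting the mixed contiguous patterns $\{(n+1,k),(n,k),(n,k\mp1)\}$ directly from the annihilating ideal, the paper has \texttt{MultiSum} produce the mixed relation $-n(1+n)JL_{n-1,k-1}-k(1+n)JL_{n,k}+2knJL_{n+1,k}=0$ and \texttt{fastZeil} a pure $k$-recurrence, then derives \eqref{2.1} and \eqref{2.2} by eliminating among these and \eqref{2.3}.
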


\begin{proof}
We first prove \eqref{2.1} by using the RISC package MultiSum a suitable recurrence for the $b_{n,k}$. MultiSum is an implementation of Wegschaider's algorithm, which is an extension of multivariate WZ summation \cite{SW1992}. After loading the package into Mathematica by

\noindent$\scriptsize{\textsf{In[1]}:=}$ \textbf{$<<$ RISC `MultiSum.m'}

~~~~~~~~~~~Package MultiSum version 2.3 written by Kurt Wegschaider

~~~~~~~~~~~enhanced by Axel Riese and Burkhard Zimmermann

~~~~~~~~~~~Copyright Research Institute for Symbolic Computation (RISC),

~~~~~~~~~~~Johannes Kepler University, Linz, Austria

we input the summand of the sum \eqref{1.1}

\noindent$\scriptsize{\textsf{In[2]:=}}$ $\boldsymbol{g:=\frac{n}{n-i}}$\textbf{Binomial}$\boldsymbol{[n-i,i]}$\textbf{Binomial}$\boldsymbol{[i,k]}$

\noindent$\scriptsize{\textsf{In[3]:=}}$ \textbf{regions=FindStructureSet[}$\boldsymbol{g, \{n, k\}, \{1,0\}, \{i\}, \{1\}, 1]}$

\noindent$\scriptsize{\textsf{Out[3]=}}$ Omit

\noindent$\scriptsize{\textsf{In[4]:=}}$ \textbf{FindRecurrence[}$\boldsymbol{g, \{n, k\}, \{i\},}$ \textbf{regions}$\boldsymbol{[[1]], 1,}$ \textbf{WZ$\to$True]}

\noindent$\scriptsize{\textsf{Out[4]=}}$ \{$-n(1+n)F[-1+n, -1+k, -1+i]-k(1+n)F[n, k, -1+i]+2knF[1+n, k, -1+i]==\Delta_i[k(1+n)F[n, k, -1+i]-2knF[1+n, k, -1+i]]$\}

\noindent$\scriptsize{\textsf{In[5]:=}}$ \textbf{SumCertificate[\%]}

\noindent$\scriptsize{\textsf{Out[5]=}}$ \{$-n(1+n)$SUM$[-1+n, -1+k]-k(1+n)$SUM$[n, k]+2kn$SUM$[1+n, k]==0$\}

In other words, we have that for $n\geq 2k\geq 0$,
\begin{equation}\label{2.4}
    -n(1+n)JL_{n-1,k-1}-k(1+n)JL_{n,k}+2knJL_{n+1,k}=0.
\end{equation}
By replacing $n$ with $n+1$ in \eqref{2.4}, we get
\begin{equation}\label{2.5}
    JL_{n,k-1}=-\frac{k}{n+1}JL_{n+1,k}+\frac{2k}{n+2}JL_{n+2,k}.
\end{equation}

By using the RISC package $\mathbf{fastZeil.m}$ which is developed by Paule, Schorn, and Riese \cite{PM1992,AR2001},

\noindent$\scriptsize{\textsf{In[6]:=}}$ \textbf{$<<$ RISC `fastZeil'}

~~~~~~~~~~Fast Zeilberger Package version 3.61

~~~~~~~~~~written by Peter Paule, Markus Schorn, and Axel Riese

~~~~~~~~~~Copyright Research Institute for Symbolic Computation (RISC),

~~~~~~~~~~Johannes Kepler University, Linz, Austria

\noindent$\scriptsize{\textsf{In[7]:=}}$ \textbf{Zb[}$\boldsymbol{g, \{i, k,}$ \textbf{Infinity\},} $\boldsymbol{k, 2]}$

\noindent$\scriptsize{\textsf{Out[7]=}}$ $\{-((2k-n)(1+2k-n)$SUM$[k])-(1+k)(7+9k-5n)$SUM$[1+k]-$$5(1+k)(2+k)$SUM$[2+k]==0\}$

In other words, we have the following formula for $n\geq 2k\geq 0$,
\begin{equation}\label{2.6}
    JL_{n,k-1}=\frac{k(5n-9k+2)}{(n-2k+2)(n-2k+1)}JL_{n,k}-\frac{5k(k+1)}{(n-2k+2)(n-2k+1)}JL_{n,k+1}.
\end{equation}
\noindent$\scriptsize{\textsf{In[8]:=}}$ \textbf{Zb}$\boldsymbol{[g, \{i, k,}$ \textbf{Infinity\}}$\boldsymbol{, n, 2]}$

\noindent$\scriptsize{\textsf{Out[8]=}}$ $\{-((1+n)(2+n)$SUM$[n])+(-1+k-n)(2+n)$SUM$[1+n]+$$(1+n)(2-2k+n)$SUM$[2+n]==0\}$

In other words, we have that for $n\geq 2k\geq 0$,
\begin{equation*}
    -(1+n)(2+n)JL_{n,k}+(-1+k-n)(2+n)JL_{n+1,k}+(1+n)(2-2k+n)JL_{n+2,k}=0.
\end{equation*}
which is \eqref{2.3}.

Therefore, replace $JL_{n+2,k}$ with \eqref{2.3} in \eqref{2.5} we can obtain the recurrence relation \eqref{2.1}. Finally, by substituting \eqref{2.1} for $JL_{n,k-1}$ in \eqref{2.6}, we obtain the recurrence relation \eqref{2.2}. This completes the proof. $\Box$
\end{proof}

\subsection{Bounds of $\frac{JL_{n+1,k}}{JL_{n,k}}$}

\begin{lemma}\label{lem2.2} For any positive integers $n$ and $k$ with $n\geq 2k$, we have
\begin{equation*}
    L(n,k)\leq \frac{JL_{n+1,k}}{JL_{n,k}},
\end{equation*}
where
\begin{equation*}
    L(n,k)=\frac{(n-k)(n+1)}{n(n-2k+1)}.
\end{equation*}
\end{lemma}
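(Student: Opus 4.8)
The plan is to study the ratio directly through the three–term recurrence \eqref{2.3}, which relates $JL_{n+2,k}$, $JL_{n+1,k}$ and $JL_{n,k}$ at a fixed $k$. Writing $r_n=\frac{JL_{n+1,k}}{JL_{n,k}}$ and dividing \eqref{2.3} by $JL_{n+1,k}$, I would obtain
\begin{equation*}
r_{n+1}=\frac{(n-k+1)(n+2)}{(n+1)(n-2k+2)}+\frac{n+2}{(n-2k+2)\,r_{n}} .
\end{equation*}
The whole argument then hinges on one algebraic observation: the constant term on the right is exactly $L(n+1,k)$. Indeed, substituting $m=n+1$ into $L(m,k)=\frac{(m-k)(m+1)}{m(m-2k+1)}$ gives $L(n+1,k)=\frac{(n-k+1)(n+2)}{(n+1)(n-2k+2)}$, which coincides with the first summand. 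I expect this identification to be the real crux: once it is spotted the inequality is almost immediate, whereas without it one is tempted to run a full two–sided induction with CAD to control $r_n$, since the map $x\mapsto A+B/x$ is decreasing and a naive lower bound on $r_n$ only yields an upper bound on $r_{n+1}$.

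With that identity in hand, the lower bound follows from positivity alone. Since $JL_{n,k}>0$ for all $n\ge 2k$ (these are combinatorial counts, and positivity is clear from \eqref{1.1} because the $i=k$ term is strictly positive while all others are nonnegative), we have $r_n>0$; moreover $n-2k+2\ge 2>0$ and $n+2>0$, so the second summand $\frac{n+2}{(n-2k+2)\,r_n}$ is strictly positive. Hence $r_{n+1}>L(n+1,k)$ for every $n\ge 2k$, which is precisely $\frac{JL_{m+1,k}}{JL_{m,k}}>L(m,k)$ for all $m\ge 2k+1$.

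It remains to treat the boundary index $m=2k$, which the recurrence step does not reach. Here I would compute both quantities directly from \eqref{1.1}: for $n=2k$ the only surviving term is $i=k$, giving $JL_{2k,k}=\frac{2k}{k}\binom{k}{k}\binom{k}{k}=2$, and similarly $JL_{2k+1,k}=\frac{2k+1}{k+1}\binom{k+1}{k}=2k+1$. Thus $r_{2k}=\frac{2k+1}{2}=L(2k,k)$, so the asserted inequality holds with equality at the boundary. Combining the strict inequality for $m\ge 2k+1$ with this boundary equality yields $L(n,k)\le \frac{JL_{n+1,k}}{JL_{n,k}}$ for all $n\ge 2k$, completing the argument. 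The only genuine work is the coefficient identity $A_n=L(n+1,k)$; everything else is positivity and a two–line boundary computation.
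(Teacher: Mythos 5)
Your proof is correct and essentially identical to the paper's: both divide the recurrence \eqref{2.3} by $JL_{n+1,k}$, identify the constant term $\frac{(n-k+1)(n+2)}{(n+1)(n-2k+2)}$ as $L(n+1,k)$, conclude from positivity of the remaining summand, and handle the boundary $n=2k$ by the direct computation $JL_{2k,k}=2$, $JL_{2k+1,k}=2k+1$, where equality holds. The only difference is presentational: the paper frames this as an induction on $n$ but never actually invokes the induction hypothesis in the step, and you rightly observe that positivity alone carries the argument for all $m\geq 2k+1$.
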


\begin{proof}
Fixing $k$, we prove this lemma by using inductions on $n$. For the case $n=2k$, we need to show
\begin{equation*}
    L(2k,k)\leq \frac{JL_{2k+1,k}}{JL_{2k,k}}.
\end{equation*}
By the definition,
\begin{equation*}
    \begin{split}
    &\frac{JL_{2k+1,k}}{JL_{2k,k}}=\frac{2k+1}{2},\\&
    L(2k,k)=\frac{2k+1}{2}.
    \end{split}
\end{equation*}
Therefore, the desired inequality holds for $n=2k$.

Assume that the inequality holds for some $n\geq 2k$, namely,
\begin{equation*}
     L(n,k)\leq \frac{JL_{n+1,k}}{JL_{n,k}}.
\end{equation*}
We proceed to prove the desired inequalities hold for $n+1$ as well. By dividing both sides of \eqref{2.3} by $JL_{n+1,k}$, we have
\begin{equation}\label{2.8}
    \frac{JL_{n+2,k}}{JL_{n+1,k}}=\frac{2+n}{n-2k+2}\frac{JL_{n,k}}{JL_{n+1,k}}+\frac{(n-k+1)(n+2)}{(1+n)(n-2k+2)}.
\end{equation}
Since $n-2k+2>0$ for $n\geq 2k$ we obtain
\begin{equation}\label{2.9}
    \frac{2+n}{n-2k+2}\frac{JL_{n,k}}{JL_{n+1,k}}>0.
\end{equation}
And we know
\begin{equation}\label{2.10}
    L(n+1,k)=\frac{(n-k+1)(n+2)}{(1+n)(n-2k+2)}.
\end{equation}
By \eqref{2.8}, \eqref{2.9} and \eqref{2.10} we obtain
\begin{equation*}
     \frac{JL_{n+2,k}}{JL_{n+1,k}}\geq L(n+1,k).
\end{equation*}
This completes the proof. $\Box$
\end{proof}

\begin{lemma}\label{lem2.3}
For any positive integers $n$ and $k$ with $n\geq 19$, $n\geq 2k$, $k_1<k<k_2$, we have
\begin{equation}
    l(n,k)\leq \frac{JL_{n+1,k}}{JL_{n,k}}\leq h(n,k),
\end{equation}
where
\begin{equation*}
    \begin{split}
    &k_1=\frac{20+9n+n^2}{4(-1+n)}-\frac{1}{4}\sqrt{\frac{480+400n+41n^2-22n^3+n^4}{(-1+n)^2}},\\&
    k_2=\frac{20+9n+n^2}{4(-1+n)}+\frac{1}{4}\sqrt{\frac{480+400n+41n^2-22n^3+n^4}{(-1+n)^2}},\\&
    l(n,k)= -\frac{2+k+n+kn-n^2}{2n(1-2k+n)}+\frac{1}{2}\sqrt{\frac{H_1(n,k)}{k(-1+2k-n)^2n^2}},\\&
    h(n,k)=\left(n-k+\frac{2\sqrt{k}(2k-n)(-1+n)}{k^{3/2}-\sqrt{H_2(n,k)}-\sqrt{k}(-3+n)}
    \right)\cdot \frac{1+n}{n(1-2k+n)},\\&
    H_1(n,k)=-40+84k+4k^2+k^3-140n+208kn-2k^2n+2k^3n-180n^2+169kn^2-16k^2n^2\\&+k^3n^2-100n^3+50kn^3-10k^2n^3-20n^4+5kn^4,\\&
    H_2(n,k)=49k+14k^2+k^3-20n+30kn-10k^2n-20n^2+5kn^2.\\&
    \end{split}
\end{equation*}
\end{lemma}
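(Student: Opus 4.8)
The plan is to establish both bounds simultaneously by induction on $n$ with $k$ held fixed, exploiting the fact that recurrence \eqref{2.3} turns the ratio $r_n := JL_{n+1,k}/JL_{n,k}$ into the orbit of a decreasing M\"obius map. Dividing \eqref{2.3} by $JL_{n+1,k}$ (exactly as in \eqref{2.8}) gives
$$r_{n+1}=\phi_n(r_n),\qquad \phi_n(x)=\frac{(n-k+1)(n+2)}{(n+1)(n-2k+2)}+\frac{n+2}{(n-2k+2)\,x}.$$
Because $n\geq 2k$ forces $n-2k+2>0$, the coefficient of $1/x$ is positive, so $\phi_n$ is strictly decreasing on $(0,\infty)$. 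This monotonicity is the structural engine of the whole argument: feeding a lower bound for $r_n$ through $\phi_n$ yields an upper bound for $r_{n+1}$, and feeding an upper bound yields a lower bound.

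First I would fix $k$ and take $n\geq\max(19,2k)$, seeding the induction at the smallest admissible $n$ by combining Lemma \ref{lem2.2} (the crude estimate $L(n,k)\le r_n$) with a direct numerical check. One must also confirm that $k$ lies in $(k_1,k_2)$ for \emph{every} $n$ along the induction, i.e.\ that the region $k_1(n)<k<k_2(n)$ is preserved under $n\mapsto n+1$, and that the radicands defining $l$, $h$, $k_1$, $k_2$ (namely $H_1$, $H_2$, and the discriminant appearing in $k_1,k_2$) remain nonnegative there so that the bounds are real. These domain facts are themselves semialgebraic statements in $n$ and $k$, and can be discharged by CAD.

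Assuming $l(n,k)\le r_n\le h(n,k)$, the inductive step splits into two halves by monotonicity of $\phi_n$. Since $\phi_n$ is decreasing, $\phi_n\bigl([l(n,k),h(n,k)]\bigr)=[\phi_n(h(n,k)),\phi_n(l(n,k))]$, so it suffices to prove the two closed-form inequalities
$$\phi_n\bigl(l(n,k)\bigr)\le h(n+1,k),\qquad \phi_n\bigl(h(n,k)\bigr)\ge l(n+1,k).$$
Each is an inequality between explicit algebraic functions of $n$ and $k$; after clearing the positive denominators and isolating and squaring the nested radicals (tracking the sign of each side before every squaring), they reduce to polynomial inequalities valid on the cell $\{n\geq 19,\ k_1<k<k_2\}$.

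The hard part will be exactly this last reduction and its verification. The bounds $l$ and especially $h$ carry nested radicals ($\sqrt{k}$, $\sqrt{H_1/k}$, $\sqrt{H_2}$), so substituting them into $\phi_n$ and comparing with the shifted bounds produces unwieldy expressions in which a careless squaring can introduce spurious sign conditions; care is needed to certify that each squaring step is sign-valid on the region. Once the step is arranged as a finite list of polynomial inequalities in the two variables $n,k$, the statements are decidable, and I would certify them by cylindrical algebraic decomposition \cite{EC1975}, in the spirit of Kauers--Paule \cite{KP2007}. This mechanical verification, rather than any conceptual step, is the genuine obstacle.
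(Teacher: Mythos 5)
Your proposal follows essentially the same route as the paper's proof: fix $k$, induct on $n$ using the decreasing M\"obius map $\phi_n$ obtained by dividing \eqref{2.3} by $JL_{n+1,k}$ (the paper's \eqref{2.8}), seed the induction at $n=2k$ where $JL_{2k,k}=2$ and $JL_{2k+1,k}=2k+1$ give the ratio $\tfrac{2k+1}{2}=h(2k,k)$, and certify the remaining semialgebraic inequalities (base case and inductive step, radicals and all) directly by CAD. The only substantive difference is that the paper exploits the identity $h(n+1,k)=\phi_n(l(n,k))$ --- $h$ is constructed as the pushforward of $l$, which is why it contains the nested radical $H_2$ --- so only the single inequality $\phi_n(h(n,k))\ge l(n+1,k)$ (their \eqref{2.14}) requires a CAD check, whereas you plan two; your second check would simply verify an identity, so this costs efficiency but is not a gap.
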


\begin{proof}
Fixing $k$, we prove this lemma by using induction on $n$. For the case $n=2k$, we need to show
\begin{equation*}
    l(2k,k)\leq \frac{JL_{2k+1,k}}{JL_{2k,k}}\leq h(2k,k).
\end{equation*}
By the definition,
\begin{equation*}
    \begin{split}
    &\frac{JL_{2k+1,k}}{JL_{2k,k}}=\frac{2k+1}{2},\\&
    l(2k,k)=-\frac{2+3k-2k^2}{4k}+\frac{1}{4}\sqrt{\frac{-40-196k-300k^2-127k^3+20k^4+4k^5}{k^3}},\\&
    h(2k,k)=\frac{2k+1}{2}.
    \end{split}
\end{equation*}
We need to confirm that $\frac{JL_{2k+1,k}}{JL_{2k,k}}\geq l(2k,k)$, then

\noindent$\scriptsize{\textsf{In[9]:=}}$ $\boldsymbol{k_1:=\frac{20+9n+n^2}{4(-1+n)}-\frac{1}{4}\sqrt{\frac{480+400n+41n^2-22n^3+n^4}{(-1+n)^2}}}$

\noindent$\scriptsize{\textsf{In[10]:=}}$ $\boldsymbol{k_2:=\frac{20+9n+n^2}{4(-1+n)}+\frac{1}{4}\sqrt{\frac{480+400n+41n^2-22n^3+n^4}{(-1+n)^2}}}$

\noindent$\scriptsize{\textsf{In[11]:=}}$ $\boldsymbol{l[2k\_,k\_]:=-\frac{2+3k-2k^2}{4k}+\frac{1}{4}\sqrt{\frac{-40-196k-300k^2-127k^3+20k^4+4k^5}{k^3}}}$

\noindent$\scriptsize{\textsf{In[12]:=}}$ \textbf{CylindricalDecomposition[Implies[}$\boldsymbol{n\geq 19\&\&k_1<k<k_2,\frac{1}{2}(1+2k)-l[2k,k]\geq 0], \{n, k\}]}$

\noindent$\scriptsize{\textsf{Out[12]=}}$ True

Therefore, the desired inequality holds for $n=2k$.

Assume that the inequality holds for some $n\geq 2k$, namely,
\begin{equation*}
    l(n,k)\leq \frac{JL_{n+1,k}}{JL_{n,k}}\leq h(n,k).
\end{equation*}
We proceed to prove the desired inequalities hold for $n+1$ as well. By \eqref{2.8}, since $n-2k+2>0$ for $n\geq 2k$ we obtain
\begin{equation}\label{2.12}
    \begin{split}
    &l(n+1,k)\leq \frac{2+n}{n-2k+2}\frac{1}{h(n,k)}+\frac{(n-k+1)(n+2)}{(1+n)(n-2k+2)}\leq \frac{JL_{n+2,k}}{JL_{n+1,k}}\\&
    \leq \frac{2+n}{n-2k+2}\frac{1}{l(n,k)}+\frac{(n-k+1)(n+2)}{(1+n)(n-2k+2)}\leq h(n+1,k).
    \end{split}
\end{equation}
By the definition of $h(n,k)$, we can see the right side of \eqref{2.12} is equal to $h(n+1,k)$. We only need to show the left side of \eqref{2.12} is greater than or equal to $l(n+1,k)$, namely,
\begin{equation*}
    l(n+1,k)\leq \frac{2+n}{n-2k+2}\frac{1}{h(n,k)}+\frac{(n-k+1)(n+2)}{(1+n)(n-2k+2)}.
\end{equation*}
The above formula is equivalent to proof
\begin{equation}\label{2.13}
    \begin{split}
    &\frac{2+n}{n-2k+2}\frac{1}{h(n,k)}+\frac{(n-k+1)(n+2)}{(1+n)(n-2k+2)}-l(n+1,k)\geq 0.
    \end{split}
\end{equation}
Since $n\geq 2k, n\geq 19,k_1<k<k_2$, \eqref{2.13} is equivalent to
\begin{equation}\label{2.14}
    \begin{split}
    &G(n,k)=\frac{1+k-n}{4-4k+2n}+\frac{1-k+n}{2-2k+n}-\\&
    \frac{\sqrt{k^3-20(2+n)(3+n)-2k^2(3+5n)+k(129+5n(10+n))}}{\sqrt{k}(4-4k+2n)}+\\&
    \frac{\left(\frac{n(1-2k+n)}{\left(-k+n+\frac{2\sqrt{k}(2k-n)(-1+n)}{\sqrt{k}(3+k-n)-\sqrt{k(7+k)^2-10(-2+k)(-1+k)n+5(-4+k)n^2}}\right)}\right)}{2-2k+n}\geq 0.
    \end{split}
\end{equation}
By using CAD,

\noindent$\scriptsize{\textsf{In[13]:=}}$ $\boldsymbol{G[n\_,k\_]:=\frac{1+k-n}{4-4k+2n}+\frac{1-k+n}{2-2k+n}-\frac{\sqrt{k^3-20(2+n)(3+n)-2k^2(3+5n)+k(129+5n(10+n))}}{\sqrt{k}(4-4k+2n)}+}$
$\boldsymbol{\frac{\left(\frac{n(1-2k+n)}{\left(-k+n+\frac{2\sqrt{k}(2k-n)(-1+n)}{\sqrt{k}(3+k-n)-\sqrt{k(7+k)^2-10(-2+k)(-1+k)n+5(-4+k)n^2}}\right)}\right)}{2-2k+n}}$

\noindent$\scriptsize{\textsf{In[14]:=}}$ \textbf{CylindricalDecomposition[Implies[}$\boldsymbol{n\geq 2k\&\&n\geq 19\&\&k_1<k<k_2,G[n,k]\geq  0],\{n, k\}]}$

\noindent$\scriptsize{\textsf{Out14]=}}$ True

This completes the proof. $\Box$
\end{proof}

$\mathbf{Remark \ 1}$. The reason why we do not reduce \eqref{2.14} further is to save computational time.

\begin{theorem}\label{thm2.4}
The sequence $\{JL_{n,k}\}_{k\geq0}$ is log-concavity.
\end{theorem}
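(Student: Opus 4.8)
The plan is to recast log-concavity as a single inequality in the one ``vertical'' ratio already controlled by Lemma~\ref{lem2.3}. Log-concavity of $\{JL_{n,k}\}_{k\geq0}$ means $JL_{n,k}^2\geq JL_{n,k-1}JL_{n,k+1}$ for every admissible $k$, and dividing by $JL_{n,k}^2>0$ this is equivalent to
\[
\frac{JL_{n,k-1}}{JL_{n,k}}\cdot\frac{JL_{n,k+1}}{JL_{n,k}}\leq 1 .
\]
Setting $t=\frac{JL_{n+1,k}}{JL_{n,k}}$ and dividing \eqref{2.1} and \eqref{2.2} by $JL_{n,k}$ expresses both horizontal ratios as explicit affine functions of the single variable $t$, namely $\frac{JL_{n,k-1}}{JL_{n,k}}=A(t):=\frac{kn}{(n+1)(n-2k+2)}t+\frac{2k}{n-2k+2}$ and $\frac{JL_{n,k+1}}{JL_{n,k}}=B(t):=-\frac{n(n-2k+1)}{5(n+1)(k+1)}t+\frac{3n-5k}{5(k+1)}$. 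For $n\geq 2k$ the slope of $A$ is positive and the slope of $B$ is negative, so $A$ is increasing and $B$ is decreasing, and both take positive values at the true $t$. The entire theorem thus reduces to proving $A(t)B(t)\leq 1$.

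Next I would feed in the two-sided bracket of Lemma~\ref{lem2.3}. For $n\geq 19$ and $k_1<k<k_2$ we have $l(n,k)\leq t\leq h(n,k)$; since $A$ increases and $B$ decreases, $A(t)\leq A(h(n,k))$ and $B(t)\leq B(l(n,k))$, and as all four quantities are positive,
\[
A(t)B(t)\leq A(h(n,k))\,B(l(n,k)).
\]
It therefore suffices to verify the purely algebraic inequality $A(h(n,k))B(l(n,k))\leq 1$ under $n\geq 2k$, $n\geq 19$, $k_1<k<k_2$, which I would hand to CylindricalDecomposition exactly as in Lemmas~\ref{lem2.2}--\ref{lem2.3}. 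Evaluating $A$ at the upper endpoint and $B$ at the lower endpoint sidesteps having to locate the maximum of the concave product $A(t)B(t)$ on $[l,h]$, whose vertex value can itself exceed $1$; this is also why the one-sided bound of Lemma~\ref{lem2.2} is too weak in the upper part of the $k$-range.

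The ranges not covered by Lemma~\ref{lem2.3} then have to be mopped up. The cases $2\leq n\leq 18$ are finitely many and are checked directly from the closed form \eqref{1.1}, the relevant rows already being displayed in Table~1. For the small-$k$ tail $k\leq k_1$, where Lemma~\ref{lem2.3} does not apply, I would fall back on the bound $L(n,k)\leq t$ of Lemma~\ref{lem2.2}, valid for all $n\geq 2k$: here one checks that the vertex of the concave parabola $A(t)B(t)$ lies to the left of $L(n,k)$, so that the product is decreasing for $t\geq L(n,k)$ and hence $A(t)B(t)\leq A(L(n,k))B(L(n,k))$, and then verifies $A(L(n,k))B(L(n,k))\leq 1$ by CAD.

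The main obstacle is coverage and tightness rather than any single clever step. Lemma~\ref{lem2.3} holds only on the window $n\geq 19$, $k_1<k<k_2$, and the quantity $A(h)B(l)$ genuinely tends to $1$ as $k$ approaches $\tfrac{n}{2}$, so the bracket $[l,h]$ must be sharp precisely there and the induction-built bounds cannot be relaxed; meanwhile the leftover small-$k$ and small-$n$ regimes need the separate Lemma~\ref{lem2.2} and direct arguments above. The practical bottleneck is the CAD itself: $l$, $h$, $k_1$, $k_2$ all carry nested square roots, so, as Remark~1 already signals, the inequality must be arranged to keep the cylindrical decomposition feasible rather than fully cleared of radicals.
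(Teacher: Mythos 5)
Your setup coincides with the paper's: dividing \eqref{2.1} and \eqref{2.2} by $JL_{n,k}$ expresses both horizontal ratios as affine functions $A(t)$, $B(t)$ of $t=JL_{n+1,k}/JL_{n,k}$, and $A(t)B(t)\leq 1$ is exactly the paper's condition $g_{n,k}(t)\geq 0$, since $1-A(t)B(t)=g_{n,k}(t)/\bigl(5(k+1)(n+1)^2(n-2k+2)\bigr)$. But your central step in the window $n\geq 19$, $k_1<k<k_2$ --- bounding $A(t)B(t)\leq A(h(n,k))B(l(n,k))$ and asking CAD to certify $A(h)B(l)\leq 1$ --- provably fails. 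The lower bound $l(n,k)$ of Lemma \ref{lem2.3} is not an arbitrary bound: comparing its formula with the paper's Out[19] shows it is exactly the \emph{larger root} of the quadratic $g_{n,k}$, so $A(l)B(l)=1$ with $A(l)>0$, hence $B(l)=1/A(l)>0$ and $A(h)B(l)=A(h)/A(l)>1$ strictly whenever $h(n,k)>l(n,k)$, which is the generic (nondegenerate-bracket) situation. Your decisive CAD call would therefore not return True, and the argument collapses precisely in the regime that the two-sided Lemma \ref{lem2.3} was built for.

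The error comes from a misdiagnosis: you worry that the vertex of the concave parabola $A(t)B(t)$ might sit inside $[l,h]$ with value exceeding $1$, but the vertex lies at the midpoint of the two roots of $g_{n,k}$ and hence strictly to the \emph{left} of the larger root $l$. Consequently $A(t)B(t)$ is decreasing on $[l,\infty)$, and the one-line monotone argument you rejected is the correct one: $t\geq l$ already gives $A(t)B(t)\leq A(l)B(l)=1$. In other words, only the lower half of Lemma \ref{lem2.3} is needed at this final stage (this is the paper's In[25], which CAD-checks $g_{n,k}(x)\geq 0$ for $x\geq l(n,k)$); the upper bound $h$ is consumed inside the induction proving Lemma \ref{lem2.3} itself, because recurrence \eqref{2.8} converts an upper bound at $n$ into a lower bound at $n+1$ and vice versa. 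Your treatment of the tail $k\leq k_1$ via $L(n,k)$ of Lemma \ref{lem2.2} is sound and matches the paper's In[22]--In[23] (there $l$ need not be a root, or $g$ may have negative discriminant, so the slack exists), though you omit the complementary range $k\geq k_2$, which the paper disposes of by the same $L(n,k)$ argument in In[24]. With the decoupled endpoint product replaced by the monotone bound at $t=l$ and that range added, your outline becomes the paper's proof.
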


\begin{proof}
By \eqref{2.1} and \eqref{2.2}, we obtain
\begin{equation*}
    \begin{split}
    &JL_{n,k}^2-JL_{n,k+1}JL_{n,k-1}\\&
    =JL_{n,k}^2-\frac{1}{5(k+1)(n+1)^2(n-2k+2)}(-n(n-2k+1)JL_{n+1,k}+\\&(3n-5k)(n+1)JL_{n,k})(knJL_{n+1,k}+2k(n+1)JL_{n,k})\\&
    =\frac{JL_{n,k}^2}{5(k+1)(n+1)^2(n-2k+2)}\\&
    \Big(kn^2(n-2k+1)\left(\frac{JL_{n+1,k}}{JL_{n,k}}\right)^2-k(n-k-2)n(1+n)\left(\frac{JL_{n+1,k}}{JL_{n,k}}\right)\\&
    -(1+n)^2(-10+(-5+k)n)\Big).
    \end{split}
\end{equation*}
Set
\begin{equation*}
    g_{n,k}(x):=kn^2(n-2k+1)x^2-k(n-k-2)n(1+n)x -(1+n)^2(-10+(-5+k)n).
\end{equation*}
Since $\frac{JL_{n,k}^2}{5(k+1)(n+1)^2(n-2k+2)}> 0$, it remains to show that $ g_{n,k}(x)\geq 0$. Denote the discriminant of $g_{n,k}(x)$ as
\begin{equation*}
    delt(n,k)=(k(n-k-2)n(1+n))^2+4kn^2(n-2k+1)(1+n)^2(-10+(-5+k)n).
\end{equation*}
Using CAD, we compute an equivalent condition for the $delt(n,k)>0$.

\noindent$\scriptsize{\textsf{In[15]:=}}$ $\boldsymbol{delt[n\_,k\_]:=(k(n-k-2)n(1+n))^2+4kn^2(n-2k+1)(1+n)^2(-10+(-5+k)n)}$

\noindent$\scriptsize{\textsf{In[16]:=}} $\textbf{CylindricalDecomposition[Implies[}$\boldsymbol{n\geq 2k\&\&n\geq 10\&\&}$$\boldsymbol{k> 0, delt[n,k]> 0],}$ $\boldsymbol{\{n, k\}]}$

\noindent$\scriptsize{\textsf{Out[16]=}}$ $n<10||(n\geq 10\&\&(k\leq 0||k> $Root$[-40-60n-20n^2+(84+40n+5n^2)\#1+(4-10n)\#1^2+\#1^3 \&, 1]))$

In other words, we obtain that the formula
\begin{equation*}
    n\geq 2k \wedge n\geq 10 \wedge k\geq 0 \Longrightarrow delt(n,k)> 0,
\end{equation*}
is equivalent over the reals to the formula
\begin{equation*}
    \begin{split}
    &n<10 \vee (n\geq 10 \wedge (k<0  \vee k> R_1)),
    \end{split}
\end{equation*}
where
\begin{equation*}
    R_1=Root(-40-60n-20n^2+(84+40n+5n^2)X+(4-10n)X^2+X^3 \&, 1).
\end{equation*}
The symbol Root $(-40-60n-20n^2+(84+40n+5n^2)X+(4-10n)X^2+X^3 \&, 1)$ denotes the algebraic function which maps $n$ to the smallest real root of the polynomial $-40-60n-20n^2+(84+40n+5n^2)X+(4-10n)X^2+X^3$.

Since the leading coefficient of $g_{n,k}(x)$ is positive, it is clearly that $g_{n,k}(x)\geq 0$ when $delt(n,k)\leq 0$. We just need to consider $n\geq 10,k>R_1$.

We compute an equivalent condition for the $g_{n,k}(x)\geq 0$,

\noindent$\scriptsize{\textsf{In[17]:=}}$ $\boldsymbol{R_1:=}$\textbf{Root}$\boldsymbol{[-40-60n-20n^2+(84+40n+5n^2)\#1+(4-10n)\#1^2+\#1^3 \&, 1]}$

\noindent$\scriptsize{\textsf{In[18]:=}}$ $\boldsymbol{g_{n,k}[x]:=kn^2(n-2k+1)x^2-k(n-k-2)n(1+n)x -(1+n)^2(-10+(-5+k)n)}$

\noindent$\scriptsize{\textsf{In[19]:=}}$ \textbf{CylindricalDecomposition[Implies[}$\boldsymbol{n\geq 2k\&\&n\geq 10\&\&k>R_1,g_{n,k}[x]\geq  0]}$ $\boldsymbol{,\{n, k, x\}]}$

\noindent$\scriptsize{\textsf{Out[19]=}}$ $n<10||(n\geq0 \&\&(k\leq$Root$[-40-60n-20n^2+(84+40n+5n^2)\#1+(4-10n)\#1^2+\#1^3 \&, 1])||($Root$[-40-60n-20n^2+(84+40n+5n^2)\#1+(4-10n)\#1^2+\#1^3 \&, 1]<k\leq \frac{n}{2}\&\&(x\leq -\frac{2+k+n+kn-n^2}{2n(1-2k+n)}-\frac{1}{2}\sqrt{\frac{H_1(n,k)}{k(-1+2k-n)^2n^2}}||x\geq  -\frac{2+k+n+kn-n^2}{2n(1-2k+n)}+\frac{1}{2}\sqrt{\frac{H_1(n,k)}{k(-1+2k-n)^2n^2}}))||k>\frac{n}{2}))$

Since $JL_{n,k}$ is sequence of positive integers, we only need to consider $x\geq -\frac{2+k+n+kn-n^2}{2n(1-2k+n)}+\frac{1}{2}\sqrt{\frac{H_1(n,k)}{k(-1+2k-n)^2n^2}}$. Meantime, we also find

\noindent$\scriptsize{\textsf{In[20]:=}}$ $\boldsymbol{H_1[n\_,k\_]:=-40+84k+4k^2+k^3-140n+208kn-2k^2n+2k^3n-180n^2}$

$\boldsymbol{+169kn^2-16k^2n^2+k^3n^2-100n^3+50kn^3-10k^2n^3-20n^4+5kn^4}$

\noindent$\scriptsize{\textsf{In[21]:=}}$ $\boldsymbol{l[n\_,k\_]:=-\frac{2+k+n+kn-n^2}{2n(1-2k+n)}+\frac{1}{2}\sqrt{\frac{H_1[n,k]}{k(-1+2k-n)^2n^2}}}$

\noindent$\scriptsize{\textsf{In[22]:=}}$ \textbf{CylindricalDecomposition[Implies[}$\boldsymbol{n\geq 2k\&\&n\geq 19\&\&k>R_1,}$

$\boldsymbol{\frac{(n-k)(n+1)}{n(n-2k+1)}-l[n,k] \geq  0],\{n, k\}]}$

\noindent$\scriptsize{\textsf{Out[22]:=}}$ $n<19||(n\geq 19\&\&(k\leq \frac{20+9n+n^2}{4(-1+n)}-\frac{1}{4}\sqrt{\frac{480+400n+41n^2-22n^3+n^4}{(-1+n)^2}}||k\geq \frac{20+9n+n^2}{4(-1+n)}+\frac{1}{4}\sqrt{\frac{480+400n+41n^2-22n^3+n^4}{(-1+n)^2}}))$

We know that $L(n,k)\geq l(n,k)$ for $n\geq 19, k\leq k_1 $ or $k\geq k_2$. By lemma \ref{lem2.2}, we have

\noindent$\scriptsize{\textsf{In[23]:=}}$ \textbf{CylindricalDecomposition[Implies[}$\boldsymbol{n\geq 2k\&\&n\geq 19\&\&R_1<k\leq k_1\&\&}$

$\boldsymbol{x\geq \frac{(n-k)(n+1)}{n(n-2k+1)}, g_{n,k}[x]\geq 0],\{n, k, x\}]}$

\noindent$\scriptsize{\textsf{Out[23]=}}$ True

\noindent$\scriptsize{\textsf{In[24]:=}}$ \textbf{CylindricalDecomposition[Implies[}$\boldsymbol{n\geq 2k\&\&n\geq 19\&\&k\geq k_2\&\&}$

$\boldsymbol{x\geq \frac{(n-k)(n+1)}{n(n-2k+1)},g_{n,k}[x]\geq 0],\{n, k, x\}]}$

\noindent$\scriptsize{\textsf{Out[24]=}}$ True

From which we gain that $g_{n,k}(x)\geq 0$ for $n\geq 19, R_1< k\leq k_1 $ or $k\geq k_2$. Following we prove that $g_{n,k}(x)\geq 0$ for $n\geq 19$, $k_1<k<k_2$. By lemma \ref{2.3}, we have

\noindent$\scriptsize{\textsf{In[25]:=}}$ \textbf{CylindricalDecomposition[Implies[}$\boldsymbol{n\geq 2k\&\&n\geq 19\&\&k_1<k<k_2\&\&}$

$\boldsymbol{x\geq l[n,k]),g_{n,k}[x])\geq 0],\{n, k, x\}]}$

\noindent$\scriptsize{\textsf{Out[25]=}}$ True

Thus we get the proof for $n\geq 19$. For $1\leq n\leq 18$, we just need a simple computation for Table 1. $\Box$
\end{proof}

By similar proof, we also show that the other two sequences A037027 and A073370 \cite{NJ} in OEIS are log-concavity.

\section{The mode of the sequence}
\begin{theorem}\label{thm3.1}
The sequence $\{JL_{n,k}\}_{k\geq 0}$ is strictly increasing in the left of  mode $\lfloor \frac{n-4}{6} \rfloor+1$ for $n\geq 4$.
\end{theorem}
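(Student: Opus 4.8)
The plan is to combine the log-concavity already established in Theorem~\ref{thm2.4} with the ratio bounds of Lemmas~\ref{lem2.2} and~\ref{lem2.3}. Write $s_k:=JL_{n,k+1}/JL_{n,k}$ and $r:=JL_{n+1,k}/JL_{n,k}$. Since $k$ is an integer, the target condition $k<k^{*}=\lfloor\frac{n-4}{6}\rfloor+1$ is equivalent to the polynomial condition $n\ge 6k+4$, which removes the floor and makes the statement amenable to CAD; strict increase to the left of the mode is exactly $s_k>1$ for every $k$ with $n\ge 6k+4$. The organizing observation is that by Theorem~\ref{thm2.4} the ratios $s_k$ are non-increasing in $k$, so it suffices to verify the single inequality $s_{k^{*}-1}>1$, after which $s_k\ge s_{k^{*}-1}>1$ follows for all smaller $k$. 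This is what lets me avoid a direct attack on the awkward small-$k$ cases.

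Next I would turn $s_k>1$ into a bound on $r$. Dividing \eqref{2.2} by $JL_{n,k}$ gives
\begin{equation*}
s_k=-\frac{n(n-2k+1)}{5(n+1)(k+1)}\,r+\frac{3n-5k}{5(k+1)}.
\end{equation*}
Because $n\ge 2k$ makes the coefficient of $r$ negative, $s_k$ is strictly decreasing in $r$, so after clearing the positive factor $5(k+1)$ one obtains the clean equivalence
\begin{equation*}
s_k>1\quad\Longleftrightarrow\quad r<U(n,k):=\frac{(3n-10k-5)(n+1)}{n(n-2k+1)},
\end{equation*}
and $U(n,k)>0$ throughout the region $n\ge 6k+4$. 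Thus the whole theorem reduces to producing a sharp enough upper bound on $r$ and checking it against $U$.

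For the upper bound I would invoke Lemma~\ref{lem2.3}: on the window $k_1<k<k_2$ (with $n\ge 19$) we have $r\le h(n,k)$, so it is enough to confirm the two-variable inequality $h(n,k)<U(n,k)$ by \textbf{CylindricalDecomposition} over the region $\{\,n\ge 6k+4,\ n\ge 19,\ k_1<k<k_2\,\}$; note that $n\ge 6k+4$ forces $k<n/6$, comfortably below $k_2\approx n/2$, so the window is genuinely entered. This settles $s_k>1$ for every admissible $k$ with $k>k_1$. For the indices $k\le k_1$ I would not seek a separate bound on $r$ at all: whenever the mode index satisfies $k^{*}-1>k_1$ I pick the least integer $k'>k_1$ (so $k'\le k^{*}-1$), use the CAD result to get $s_{k'}>1$, and then propagate downward by the monotonicity of $s_k$ to conclude $s_k\ge s_{k'}>1$ for all $k\le k_1$.

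The one genuinely delicate point, and the main obstacle, is the mismatch of ranges: $k_1$ tends to a constant (near $5$) while the mode index $k^{*}-1=\lfloor\frac{n-4}{6}\rfloor$ grows linearly, so $k^{*}-1$ only enters the window $k_1<k$ once $n$ passes an explicit threshold (around $n=43$). For smaller $n$ one has $k^{*}-1\le k_1$, which, because $k^{*}-1$ grows linearly, confines these exceptional pairs to a finite range of $n$; there I would simply verify the finitely many inequalities $s_k>1$ by direct computation, extending Table~1. The remaining practical risks are whether $h(n,k)$ is tight enough that $h<U$ actually holds on the whole strip (if not, one sharpens the bound by iterating the recurrence \eqref{2.8} once more, exactly as $h$ was built, or falls back on a cruder upper bound obtained from Lemma~\ref{lem2.2} by substituting its lower bound for $r$ into \eqref{2.8}), and keeping the cylindrical algebraic decomposition within a feasible running time.
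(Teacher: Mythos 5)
Your two structural reductions are sound and genuinely different from the paper's route: using Theorem~\ref{thm2.4} to make the ratios $s_k=JL_{n,k+1}/JL_{n,k}$ non-increasing so that only $s_{k^*-1}>1$ needs checking, and using \eqref{2.2} to translate this into $r<U(n,k)=\frac{(3n-10k-5)(n+1)}{n(n-2k+1)}$, are both correct. But the load-bearing step fails: the inequality $h(n,k)<U(n,k)$ is \emph{false} precisely on the band $6k+4\le n\le 6k+9$ where you must apply it, i.e.\ at $k=k^*-1$. Along the diagonal $n=6k+4$ one has $U(6k+4,k)=\frac{(8k+7)(6k+5)}{(6k+4)(4k+5)}=2-\frac{5}{12k}+O(k^{-2})$, whereas the bound of Lemma~\ref{lem2.3} satisfies $h(6k+4,k)=2+\frac{c}{k}+O(k^{-2})$ with $c\approx 0.97$: both quantities converge to the fixed point $2$ of the ratio map \eqref{2.8} at slope $n\approx 6k$, with $h$ approaching from above and $U$ from below. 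Concretely, at $(n,k)=(64,10)$ — on the diagonal $n=6k+4$, with $k^*-1=10$, and inside your window since $k_1\approx 5.3$, $k_2\approx 31.9$ — one computes $U\approx 1.9635$ but $h\approx 2.11$, so your \textbf{CylindricalDecomposition} call cannot return True. Your fallback does not rescue this: since the available margin is only of order $1/k$, an upper bound on $r$ must be sharp to $o(1/k)$ uniformly in $k$, while each iteration of \eqref{2.8} starting from Lemma~\ref{lem2.2} contracts the error only by a bounded factor — at $(64,10)$ one iteration gives $r\le 2.40$ and three give $r\le 2.01$, still above $U$ — so the required iteration depth grows with $k$ and no single closed-form bound fed to CAD covers all $k$. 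This is exactly the obstruction that explains the paper's design: it abandons ratios in Section~3 and works with $\bigtriangleup(n,k)=JL_{n,k+1}-JL_{n,k}$ directly, deriving a three-term recurrence in $n$ for $\bigtriangleup$ (Lemma~\ref{lem3.2}), propagating positivity and growth for $n\ge 6k+4$ by CAD (Lemma~\ref{lem3.4}), and settling the diagonal base cases $\bigtriangleup(6k+4,k)>0$ and $\bigtriangleup(6k+3,k)<0$ by Zeilberger recurrences in $k$ along those diagonals plus CAD (Lemma~\ref{lem3.5}, Theorem~\ref{lem3.7}); sign information survives arbitrarily close to the diagonal, where $O(1/k)$-sharp ratio bounds are simply not available from Lemmas~\ref{lem2.2} and~\ref{lem2.3}.

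A secondary gap: the statement presupposes that $k^*$ is the mode, which requires the complementary half $\bigtriangleup(n,k)\le 0$ for $2k\le n\le 6k+3$ — equivalently $s_{k^*}\le 1$, which with your monotone-ratio observation would propagate to all $k\ge k^*$. The paper proves this half (Proposition~\ref{pro3.6} via Theorem~\ref{lem3.7}); your proposal addresses only $s_k>1$ for $n\ge 6k+4$ and says nothing about the decreasing side, so even with a repaired upper bound on $r$ it would not establish the full claim.
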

Denote
\begin{equation*}
    \bigtriangleup(n,k)=JL_{n,k+1}-JL_{n,k}
\end{equation*}
then the theorem equals to
\begin{equation*}
    \begin{cases}
        \bigtriangleup(n,k)\leq 0,~~~~if~~2k\leq n\leq 6k+3,\\
        \bigtriangleup(n,k)\geq 0,~~~~if~~n\geq 6k+4.
    \end{cases}
\end{equation*}

\begin{lemma}\label{lem3.2}For any positive integers n and k with $n\geq 2k$, we have
\begin{equation}\label{3.1}
    \sum_{j=0}^{2}d_{j}(n,k)\bigtriangleup(n+j,k)=0,
\end{equation}
where
\begin{equation*}
    \begin{split}
    &d_0(n,k)=(1+n)(2+n)(-2-6k+n)(-3k+n),\\&
    d_1(n,k)=(2+n)(3+12k+3k^2-18k^3+8kn+27k^2n-2n^2-10kn^2+n^3),\\&
    d_2(n,k)=-(1+n)(-3-6k+n)(-1-3k+n)(2-2k+n).
    \end{split}
\end{equation*}
\end{lemma}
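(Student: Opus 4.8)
The plan is to derive the recurrence \eqref{3.1} as a closure property of the $P$-recursive sequence $JL_{n,k}$, using only the relations already established in Lemma~\ref{lem2.1}. Fix $k$ and regard $y_n := JL_{n,k}$ as a sequence in the single variable $n$. Clearing denominators in \eqref{2.3} shows that $y_n$ is annihilated by the second-order operator
\[
L \;=\; (n+1)(n-2k+2)\,S^2 \;-\; (n-k+1)(n+2)\,S \;-\; (n+1)(n+2),
\]
where $S$ denotes the forward shift $n\mapsto n+1$. Hence, for fixed $k$, the two sequences $y_n$ and $y_{n+1}=JL_{n+1,k}$ span the full two-dimensional solution space of $L$, and every further shift $JL_{n+j,k}$ reduces, modulo $L$, to a $\mathbb{C}(n,k)$-linear combination of $JL_{n,k}$ and $JL_{n+1,k}$.

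Next I would express the difference in that same basis. Since \eqref{2.2} writes $JL_{n,k+1}$ in terms of $JL_{n+1,k}$ and $JL_{n,k}$, subtracting $JL_{n,k}$ gives
\[
\bigtriangleup(n,k) \;=\; -\frac{n(n-2k+1)}{5(n+1)(k+1)}\,JL_{n+1,k} \;+\; \frac{3n-10k-5}{5(k+1)}\,JL_{n,k},
\]
that is, $\bigtriangleup(n,k)=\bigl(a(n)S+b(n)\bigr)y_n$ for explicit rational functions $a,b$. Applying $S$ once and twice, and using $L$ each time to rewrite $JL_{n+2,k}$ and $JL_{n+3,k}$ back in terms of $JL_{n,k}$ and $JL_{n+1,k}$, I obtain three expressions
\[
\bigtriangleup(n+j,k) \;=\; p_j(n,k)\,JL_{n+1,k} + q_j(n,k)\,JL_{n,k}, \qquad j=0,1,2,
\]
with $p_j,q_j\in\mathbb{C}(n,k)$.

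The three coordinate vectors $(p_j,q_j)$ live in a two-dimensional space, so they are linearly dependent; this is exactly why a second-order (three-term) recurrence in $n$ must exist, even though $\bigtriangleup$ mixes the two columns $k$ and $k+1$. The coefficients are then forced, up to a common factor, to be the $2\times2$ minors
\[
(d_0,d_1,d_2) \;\propto\; \bigl(p_1q_2-p_2q_1,\; p_2q_0-p_0q_2,\; p_0q_1-p_1q_0\bigr).
\]
Clearing the common denominator and cancelling the extraneous factor should reproduce precisely the stated $d_0,d_1,d_2$. Equivalently, and more cleanly to present, one substitutes the three displayed expressions into $\sum_{j=0}^{2} d_j(n,k)\,\bigtriangleup(n+j,k)$ with the \emph{given} $d_j$ and checks that the coefficients of $JL_{n+1,k}$ and of $JL_{n,k}$ both vanish identically as rational functions of $n$ and $k$, whence the left-hand side is the zero sequence.

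The main obstacle is purely bookkeeping in the third step: reducing $S^2\bigtriangleup$ requires rewriting $JL_{n+3,k}$ by two applications of $L$ with the rational coefficients correctly shifted in $n$, and the intermediate $p_j,q_j$ are bulky. After clearing denominators, however, the candidate coefficients collapse to the degree-four polynomials $d_0,d_1,d_2$ recorded in the statement, so the final claim becomes a single polynomial identity in $n$ and $k$ that is confirmed by expansion, readily automated by the same RISC/Ore-algebra machinery already invoked in the paper. No genuine estimate is involved; the real content is the observation that the apparent mixing of two columns still reduces to a second-order recurrence.
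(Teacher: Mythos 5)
Your proposal is correct, but it proves the lemma by a genuinely different route than the paper. The paper's proof is a single creative-telescoping computation: it rewrites $\bigtriangleup(n,k)$ as the hypergeometric sum \eqref{3.2} (using $\binom{i}{k+1}-\binom{i}{k}=\binom{i}{k}\frac{i-2k-1}{k+1}$) and then applies Zeilberger's algorithm in the variable $n$ (the \textsf{Zb} call in \textsf{In[26]}), which outputs the three-term recurrence \eqref{3.1} with exactly the stated $d_0,d_1,d_2$, certificate included. You instead derive \eqref{3.1} as a closure property from Lemma \ref{lem2.1}: using \eqref{2.2} to write $\bigtriangleup(n,k)$ in the basis $\{JL_{n,k},JL_{n+1,k}\}$ (your coefficient $\frac{3n-10k-5}{5(k+1)}$ is correct), reducing the shifts $\bigtriangleup(n+1,k),\bigtriangleup(n+2,k)$ modulo the second-order operator coming from \eqref{2.3}, and invoking linear dependence of three vectors in a two-dimensional $\mathbb{C}(n,k)$-space; this is sound, and the denominators involved ($n+1$, $n+2$, $n-2k+2$, $n-2k+3$, $5(k+1)$) are all positive for $n\geq 2k$, so the reductions are legitimate on the stated range. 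Two minor points: your remark that $JL_{n,k}$ and $JL_{n+1,k}$ ``span the solution space of $L$'' is not quite the operative fact (they are not two solutions of $L$; what you actually use, and correctly state, is reduction modulo $L$), and the identification of $(d_0,d_1,d_2)$ with the $2\times 2$ minors presupposes the $2\times 3$ coefficient matrix has rank $2$ — but your fallback of substituting the \emph{given} $d_j$ and checking that the coefficients of $JL_{n,k}$ and $JL_{n+1,k}$ vanish identically sidesteps this and is fully rigorous. Comparing the two: the paper's route is a one-shot certified computation that never needs Lemma \ref{lem2.1}, while yours explains structurally \emph{why} a pure-$n$ three-term recurrence for $\bigtriangleup$ must exist despite $\bigtriangleup$ mixing the columns $k$ and $k+1$, stays entirely within already-proven contiguous relations, and terminates in a polynomial identity that is in principle checkable by hand.
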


\begin{proof}
By Eq. \eqref{1.1},
\begin{equation}\label{3.2}
    \begin{split}
    &\bigtriangleup(n,k)=\sum_{i=k}^{\lfloor n/2 \rfloor}\frac{n}{n-i}\binom{n-i}{i}\binom{i}{k}\frac{i-2k-1}{k+1}.
    \end{split}
\end{equation}
It is similar to the proof for Lemma \ref{2.1}, by using the RISC package fastZeil.m on $n$, we have

\noindent$\scriptsize{\textsf{In[26]:=}}$ \textbf{Zb[}$\boldsymbol{\frac{n}{n-i}}$\textbf{Binomial}$\boldsymbol{[n-i,i]}$\textbf{Binomial}$\boldsymbol{[i, k]\frac{i-2k-1}{k+1},\{i, k,}$   \textbf{Infinity\},}$\boldsymbol{n, 2]}$

\noindent$\scriptsize{\textsf{Out[26]=}}$ \{$(1+n)(2+n)(-2-6k+n)(-3k+n)$SUM$[n]+(2+n)(3+12k+3k^2-18k^3+8kn+27k^2n-2n^2-10kn^2+n^3)$SUM$[1+n]-(1+n)(-3-6k+n)(-1-3k+n)(2-2k+n)$SUM$[2+n]==0$\}

This completes the proof. $\Box$
\end{proof}

\begin{proposition}\label{pro3.3} For any nonnegative integers n and k, then $\bigtriangleup(n,k)>0$ for $n\geq 6k+4$.
\end{proposition}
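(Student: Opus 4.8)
The plan is to prove the proposition by induction on $n$ for each fixed $k$, using the second-order recurrence \eqref{3.1} of Lemma \ref{lem3.2} as the engine. Solving \eqref{3.1} for the top-order term gives
\begin{equation*}
\bigtriangleup(n+2,k)=\frac{d_0(n,k)\,\bigtriangleup(n,k)+d_1(n,k)\,\bigtriangleup(n+1,k)}{-\,d_2(n,k)} ,
\end{equation*}
so once the two predecessors are positive, positivity of $\bigtriangleup(n+2,k)$ follows as soon as we know $d_0(n,k)>0$, $d_1(n,k)>0$ and $d_2(n,k)<0$ throughout the range $n\ge 6k+4$. Note that $n\ge 6k+4$ already forces $n\ge 2k$, so we remain in the domain where \eqref{3.1} is valid.

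First I would settle the signs of the coefficients on $\{n\ge 6k+4,\ k\ge 0\}$. For $d_0$ and $d_2$ this is immediate from the factored forms $d_0(n,k)=(1+n)(2+n)(n-6k-2)(n-3k)$ and $d_2(n,k)=-(1+n)(n-6k-3)(n-3k-1)(n-2k+2)$: each linear factor $n-6k-2\ge 2$, $n-3k\ge 3k+4$, $n-6k-3\ge 1$, $n-3k-1\ge 3k+3$ and $n-2k+2$ is strictly positive when $n\ge 6k+4$, so $d_0>0$ and $d_2<0$. For $d_1(n,k)=(2+n)P(n,k)$ with $P$ the cubic factor in $n$, I would check that $P(6k+4,k)=39k^2+76k+35>0$ and that $P$ is increasing on $[6k+4,\infty)$: its derivative at $n=6k+4$ equals $15k^2+48k+32>0$, and the larger root of $P'$ satisfies $\sqrt{76k^2+64k+16}<16k+20$, hence lies below $6k+4$. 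Thus $d_1>0$; alternatively this lone polynomial inequality is exactly the kind of statement one discharges by CAD over $n\ge 6k+4$, $k\ge 0$, in the style of Section~2.

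With the signs in hand the induction step is a one-line positivity argument: $-d_2(n,k)>0$ together with $d_0(n,k),d_1(n,k)>0$ makes $\bigtriangleup(n+2,k)$ a positive combination of $\bigtriangleup(n,k)$ and $\bigtriangleup(n+1,k)$, so the two inductive hypotheses propagate. It therefore remains to establish the two base cases $\bigtriangleup(6k+4,k)>0$ and $\bigtriangleup(6k+5,k)>0$ for every $k\ge 0$, and this is where I expect the real difficulty to lie. The summation formula \eqref{3.2} does not render them positive by inspection, since the summand carries the factor $i-2k-1$, which is negative for $i\le 2k$ and positive for $i\ge 2k+2$, so each boundary value is a genuine difference of competing contributions.

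To dispose of the base cases I would convert the sign of $\bigtriangleup$ into a one-step ratio bound. Using \eqref{2.2} one rewrites
\begin{equation*}
\bigtriangleup(n,k)=\frac{JL_{n,k}}{5(k+1)}\left[(3n-10k-5)-\frac{n(n-2k+1)}{n+1}\cdot\frac{JL_{n+1,k}}{JL_{n,k}}\right],
\end{equation*}
and since $3n-10k-5\ge 8k+7>0$ on $n\ge 6k+4$, the bracket, hence $\bigtriangleup(n,k)$, is positive exactly when $\frac{JL_{n+1,k}}{JL_{n,k}}$ stays below $\frac{(3n-10k-5)(n+1)}{n(n-2k+1)}$. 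For the two base lines $n=6k+4$ and $n=6k+5$ with $k$ large I would feed the upper bound $h(n,k)$ of Lemma \ref{lem2.3} (and Lemma \ref{lem2.2} to cover $k\notin(k_1,k_2)$) into this inequality and verify the resulting semialgebraic statement by CAD, exactly as in Section~2; the finitely many small-$k$ instances excluded by the $n\ge 19$ hypothesis of Lemma \ref{lem2.3} are checked directly against Table~\ref{tab:my_label}. Combining the base cases with the induction step then yields $\bigtriangleup(n,k)>0$ for all $n\ge 6k+4$.
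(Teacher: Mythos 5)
Your induction step is correct, and in fact cleaner than the paper's: the factorizations give $d_0(n,k)=(1+n)(2+n)(n-6k-2)(n-3k)>0$ and $d_2(n,k)=-(1+n)(n-6k-3)(n-3k-1)(n-2k+2)<0$ on $n\geq 6k+4$, and your computations $P(6k+4,k)=39k^2+76k+35$, $P'(6k+4,k)=15k^2+48k+32$ with the larger root of $P'$ below $6k+4$ are all accurate, so $d_1>0$ and positivity propagates through \eqref{3.1} without the monotonicity invariant $D_1>D_0$ that the paper carries in Lemma \ref{lem3.4}. The genuine gap is in the base cases, exactly where you suspected the difficulty lies. Your reduction of $\bigtriangleup(n,k)>0$ to the ratio threshold $\frac{JL_{n+1,k}}{JL_{n,k}}<\frac{(3n-10k-5)(n+1)}{n(n-2k+1)}$ via \eqref{2.2} is algebraically correct, but the bound $h(n,k)$ of Lemma \ref{lem2.3} is not strong enough to pass it on the line $n=6k+4$: there the threshold equals
\begin{equation*}
\frac{(8k+7)(6k+5)}{(6k+4)(4k+5)}=2-\frac{10k+5}{24k^2+46k+20}<2,
\end{equation*}
approaching $2$ from below, while $h(6k+4,k)$ approaches $2$ from above. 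Concretely, at $k=10$, $n=64$ one has $k_1\approx 5.34<10<k_2\approx 31.9$, so Lemma \ref{lem2.3} applies, yet $h(64,10)\approx 2.111$ against a threshold of $\approx 1.963$; at $k=100$, $n=604$ the comparison is $h\approx 2.006$ versus $\approx 1.996$. So the semialgebraic statement you propose to discharge by CAD is simply false, and the method certifies nothing: $h$ was built just tight enough for log-concavity, not tight enough to decide the sign of $\bigtriangleup$ at the mode. A secondary slip: for $k\notin(k_1,k_2)$ you invoke Lemma \ref{lem2.2}, but that is a \emph{lower} bound on the ratio and cannot show the ratio stays below a threshold; moreover the excluded base-line instances (e.g.\ $k=3,4,5$, i.e.\ $n=22,28,34$, where $k<k_1$) lie outside Table \ref{tab:my_label}, which stops at $n=18$. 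Those are finitely many values and could be computed directly, so that part is repairable; the $h$-based step is not.

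The paper sidesteps ratio bounds entirely for the base case: it specializes to the diagonal, sets $A(k)=\bigtriangleup(6k+4,k)$, applies creative telescoping in $k$ (fastZeil) to the closed form \eqref{3.2} with $n=6k+4$ to obtain a second-order recurrence $b_0(k)A(k)+b_1(k)A(k+1)+b_2(k)A(k+2)=0$, and then proves by a CAD-certified induction that $A$ is strictly increasing, whence $A(k)\geq A(0)=1>0$ (Lemma \ref{lem3.5}). To repair your argument while keeping your sign-analysis induction step, you would need the same diagonal-recurrence treatment, and applied to \emph{both} lines $A(k)=\bigtriangleup(6k+4,k)$ and $\widetilde{A}(k)=\bigtriangleup(6k+5,k)$, since your second-order recurrence in $n$ consumes two consecutive positive base values before it can run.
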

\begin{lemma}\label{lem3.4} For any nonnegative integers n and k with $n\geq 6k+4$,  $\bigtriangleup(n,k)$ is increasing, i.e.,
\begin{equation}
   \Phi(n,k)=\bigtriangleup(n+1,k)-\bigtriangleup(n,k)>0.
\end{equation}\label{3.3}
\end{lemma}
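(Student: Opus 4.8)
The plan is to turn Lemma~\ref{lem3.2} into a usable recurrence for $\Phi$ and then run an induction on $n$ with $k$ fixed. Abbreviating $d_j=d_j(n,k)$ and using $\bigtriangleup(n+1,k)=\bigtriangleup(n,k)+\Phi(n,k)$ together with $\bigtriangleup(n+2,k)=\bigtriangleup(n,k)+\Phi(n,k)+\Phi(n+1,k)$, substitution into \eqref{3.1} collapses the three-term recurrence to
\begin{equation*}
d_2\,\Phi(n+1,k)=-\bigl(d_0+d_1+d_2\bigr)\bigtriangleup(n,k)-\bigl(d_1+d_2\bigr)\Phi(n,k).
\end{equation*}
This single identity is what I would build the argument on: it propagates positivity of the difference-of-differences one step in $n$, provided I simultaneously control the sign of $\bigtriangleup(n,k)$ itself.

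The next step is to fix the signs of the three coefficient blocks on the region $n\ge 6k+4$ by cylindrical algebraic decomposition, exactly as in the previous section. Two of them are immediate: on this region the four factors of $d_2=-(1+n)(-3-6k+n)(-1-3k+n)(2-2k+n)$ are all positive, so $d_2<0$, and every factor of $d_0$ is positive, so $d_0>0$. For $d_1+d_2$ and $d_0+d_1+d_2$ a leading-order look is reassuring, since $d_0=n^4+\cdots$, $d_1=n^4-10kn^3+\cdots$ and $d_2=-n^4+(11k+1)n^3+\cdots$ give $d_1+d_2=(k+1)n^3+\cdots>0$ and $d_0+d_1+d_2=n^4+\cdots>0$; I would hand the implications $\{n\ge 2k,\ n\ge 6k+4\}\Rightarrow d_1+d_2>0$ and $\Rightarrow d_0+d_1+d_2>0$ to \textbf{CylindricalDecomposition} and expect \textbf{True}. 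With the signs in place I would prove, by a single induction on $n\ge 6k+4$, the joint statement $\bigtriangleup(n,k)>0$ and $\Phi(n,k)>0$; this is self-contained (the step never uses positivity it has not already proved) and, as a bonus, discharges Proposition~\ref{pro3.3} along the way. The inductive step is then immediate: $\bigtriangleup(n+1,k)=\bigtriangleup(n,k)+\Phi(n,k)>0$, and since $-d_2>0$,
\begin{equation*}
\Phi(n+1,k)=\frac{\bigl(d_0+d_1+d_2\bigr)\bigtriangleup(n,k)+\bigl(d_1+d_2\bigr)\Phi(n,k)}{-d_2}>0,
\end{equation*}
both numerator and denominator being positive by the sign analysis and the induction hypothesis.

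The real difficulty, and where I expect to spend the effort, is the base case. For fixed $k$ the induction starts at $n=6k+4$, so I must establish $\bigtriangleup(6k+4,k)>0$ and $\Phi(6k+4,k)>0$ uniformly in $k$ rather than for finitely many values. This is genuinely delicate because in \eqref{3.2} the factor $i-2k-1$ is negative for $i\le 2k$ and positive afterwards, so positivity is not termwise, and the recurrence of Lemma~\ref{lem3.2} degenerates on the boundary (the factor $-3-6k+n$ vanishes at $n=6k+3$, so one cannot simply step backwards into the region $n\le 6k+3$ where $\bigtriangleup\le 0$). I would handle the base family by specializing \eqref{1.1} and \eqref{3.2} to the diagonal $n=6k+4$, running Zeilberger in the single variable $k$ to obtain recurrences in $k$ for $\bigtriangleup(6k+4,k)$ and $\bigtriangleup(6k+5,k)$, and then proving positivity of these one-parameter sequences by an auxiliary induction closed with CAD together with a direct check of the first few $k$. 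Once the base case is secured the rest is routine, so this boundary family is the crux of the proof.
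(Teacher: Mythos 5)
Your proposal is essentially the paper's proof. The paper likewise fixes $k$, inducts on $n\geq 6k+4$, and closes the induction step by handing the recurrence of Lemma \ref{lem3.2} to CAD: its query, with real variables $D_0,D_1$ standing for $\bigtriangleup(n,k),\bigtriangleup(n+1,k)$ and hypotheses $n\geq 6k+4$, $k\geq 0$, $D_1>D_0>0$, concluding $-\frac{d_0}{d_2}D_0-\frac{d_1}{d_2}D_1>D_1$, is exactly your identity $d_2\,\Phi(n+1,k)=-(d_0+d_1+d_2)\bigtriangleup(n,k)-(d_1+d_2)\Phi(n,k)$, except that the paper delegates the whole sign analysis to one \textbf{CylindricalDecomposition} call instead of splitting it into the blocks $d_2<0$, $d_1+d_2>0$, $d_0+d_1+d_2>0$; and the diagonal base is treated, just as you predict, by Zeilberger in $k$ plus a CAD-closed induction (Lemma \ref{lem3.5}, giving $A(k)=\bigtriangleup(6k+4,k)$ increasing with $A(0)=1$). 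Two remarks on the base case, which you rightly call the crux. First, you are actually more careful here than the paper: the induction (yours and the paper's alike, since both use the hypothesis $D_1>D_0>0$) needs not only $\bigtriangleup(6k+4,k)>0$ but also $\Phi(6k+4,k)>0$, i.e.\ $\bigtriangleup(6k+5,k)>\bigtriangleup(6k+4,k)$, uniformly in $k$; the paper's Lemma \ref{lem3.5} supplies only the former, so your insistence on the second boundary family addresses a point the published argument passes over in silence. Second, a small tightening of your own plan: proving positivity of $\bigtriangleup(6k+5,k)$ as a one-parameter sequence does not by itself yield $\Phi(6k+4,k)>0$; you should instead apply Zeilberger directly to the single sum for the difference $\Phi(6k+4,k)$ (its summand $f(6k+5,i,k)-f(6k+4,i,k)$ is again a hypergeometric term in $(i,k)$, since the two terms have rational ratio) and establish positivity of that sequence by the same increasing-plus-initial-value CAD scheme you and the paper use for $\bigtriangleup(6k+4,k)$. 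With that adjustment your argument is complete, and it simultaneously delivers Proposition \ref{pro3.3}, as you note.
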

\pf With an induction proof on $n$  which equals the following  formula,
\begin{equation*}
    \forall n\in \mathbb{N}~~ \forall k\in \mathbb{N}:(n\geq 6k+4\wedge\Phi(n,k))\Longrightarrow \Phi(n+1,k).
\end{equation*}
Applying the three recurrence relations
 $\Phi(n+1,k)$ is found to be equivalent to
\begin{equation}\label{3.4}
    \begin{split}
    & \bigtriangleup(n+2,k)> \bigtriangleup(n+1,k)\\&
    \Longleftrightarrow-\frac{d_0(n,k)}{d_2(n,k)}\bigtriangleup(n,k)-\frac{d_1(n,k)}{d_2(n,k)}\bigtriangleup(n+1,k)>\bigtriangleup(n+1,k).
    \end{split}
\end{equation}
Based on the application of CAD, where the  real variables $D_0,D_1$ represent $\bigtriangleup(n,k),\bigtriangleup(n+1,k)$, then the proof equals

\noindent$\scriptsize{\textsf{In[27]:=}}$ $\boldsymbol{d_0[n\_,k\_]:=(1+n)(2+n)(-2-6k+n)(-3k+n)}$

\noindent$\scriptsize{\textsf{In[28]:=}}$ $\boldsymbol{d_1[n\_,k\_]:=(2+n)(3+12k+3k^2-18k^3+8kn+27k^2n-2n^2-10kn^2+n^3)}$

\noindent$\scriptsize{\textsf{In[29]:=}}$ $\boldsymbol{d_2[n\_,k\_]:=-(1+n)(-3-6k+n)(-1-3k+n)(2-2k+n)}$

\noindent$\scriptsize{\textsf{In[30]:=}}$ \textbf{CylindricalDecomposition[Implies[}$\boldsymbol{n\geq 6k+4\&\&D_0>0\&\&D_1>0\&\&k\geq 0}$

$\boldsymbol{\&\&D_1>D_0,-\frac{d_0[n,k]}{d_2[n,k]}D_0-\frac{d_1[n,k]}{d_2[n,k]}D_1>D_1],\{n, k, D_0, D_1\}]}$

\noindent$\scriptsize{\textsf{Out[30]=}}$ True

The proof is completed  for all $n\geq 6k+4$. $\Box$

\begin{lemma}\label{lem3.5} For any nonnegative integer $k$, $\bigtriangleup(6k+4,k)$ is positive.
\end{lemma}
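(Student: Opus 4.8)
The plan is to reduce the statement to the positivity of one explicit sequence in $k$ and then establish that positivity with the same creative-telescoping-plus-CAD machinery used earlier. Put $a_k:=\bigtriangleup(6k+4,k)$. Specialising \eqref{3.2} to $n=6k+4$ (so $\lfloor n/2\rfloor=3k+2$) gives
\begin{equation*}
a_k=\sum_{i=k}^{3k+2}\frac{6k+4}{6k+4-i}\binom{6k+4-i}{i}\binom{i}{k}\,\frac{i-2k-1}{k+1}.
\end{equation*}
The summand is a proper hypergeometric term in $(i,k)$---its factorial arguments $6k+4-i$, $i$, $6k+4-2i$, $i-k$ are all integer-linear in $i$ and $k$---and it vanishes outside $k\le i\le 3k+2$ because of the two binomial coefficients, so the sum has natural boundaries. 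First I would run the RISC summation packages already used in Lemma~\ref{lem2.1} and Lemma~\ref{lem3.2} (Zeilberger's algorithm) on this term with respect to $k$; summing the resulting creative-telescoping relation over $i$ collapses the right-hand side under the natural boundary conditions and yields a homogeneous linear recurrence $\sum_{j=0}^{J}c_j(k)\,a_{k+j}=0$ with polynomial coefficients.

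With the recurrence in hand I would prove $a_k>0$ for all $k\ge 0$ by induction on $k$, in the spirit of the $D_0,D_1$ device used for $\Phi(n+1,k)$ in Lemma~\ref{lem3.4}. The base values $a_0=1,\ a_1=15,\ a_2=504,\dots$ (computed from \eqref{1.1}) are positive. For the inductive step I would solve the recurrence for the top term, $a_{k+J}=-\sum_{j<J}\tfrac{c_j(k)}{c_J(k)}a_{k+j}$, and ask \texttt{CylindricalDecomposition} to certify $a_{k+J}>0$ from the positivity of the previous $J$ terms. Since the recurrence has order $\ge 2$ with coefficients of mixed sign, the bare hypotheses $a_{k+j}>0$ will not be enough, and one must carry an inductive invariant controlling the consecutive ratio. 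The observed ratios $a_{k+1}/a_k=15,\,33.6,\,41.0,\,45.4,\dots$ are increasing and seem bounded, so I would guess an explicit lower bound $\lambda(k)\le a_{k+1}/a_k$, introduce $D_j:=a_{k+j}$ as positive real variables together with the relations $D_{j+1}\ge\lambda(k+j)D_j$, and let CAD both verify that this bound propagates and that it forces the top term positive---the exact analogue of how Lemma~\ref{lem2.2} and Lemma~\ref{lem2.3} transport the ratio bounds $L,l,h$ for $JL_{n+1,k}/JL_{n,k}$.

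The hard part will be discovering a ratio bound sharp enough for CAD to return \textsf{True} while keeping the computation feasible. It is worth recording why the obvious shortcut fails: dividing \eqref{2.2} gives $\bigtriangleup(n,k)=\tfrac{JL_{n,k}}{5(k+1)}\bigl((3n-10k-5)-\tfrac{n(n-2k+1)}{n+1}\tfrac{JL_{n+1,k}}{JL_{n,k}}\bigr)$, so $\bigtriangleup(n,k)>0$ is equivalent to the upper ratio bound $JL_{n+1,k}/JL_{n,k}<\tfrac{(3n-10k-5)(n+1)}{n(n-2k+1)}$, i.e. on the diagonal $n=6k+4$ to $JL_{6k+5,k}/JL_{6k+4,k}<\tfrac{(8k+7)(6k+5)}{(6k+4)(4k+5)}$. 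Unfortunately the upper bound $h(n,k)$ of Lemma~\ref{lem2.3} is too weak to close this: along $n=6k+4$ both $h$ and the required threshold tend to $2$ as $k\to\infty$, yet $h$ stays above the threshold (already at $k=10$ one finds $h\approx 2.11$ against a threshold $\approx 1.96$). Hence the proof must either re-run the Riccati-type induction of Section 2 with an upper bound tailored to the diagonal, or---as planned above---work directly with the $k$-recurrence for $a_k$; in both routes the genuine obstacle is producing an invariant tight enough for CAD, the finitely many small cases being dispatched by direct evaluation.
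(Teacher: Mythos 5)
Your proposal follows essentially the same route as the paper's proof: specialize \eqref{3.2} to the diagonal $n=6k+4$, run Zeilberger's algorithm in $k$ on the bounded sum to get a second-order recurrence $\sum_{j=0}^{2}b_j(k)A(k+j)=0$ for $A(k)=\bigtriangleup(6k+4,k)$, and close the induction with a CAD-certified implication on real variables $D_0,D_1,D_2$ starting from $A(0)=1$, exactly the device of Lemma~\ref{lem3.4}. The one point where you overestimate the difficulty is the invariant: the paper needs no tailored ratio bound $\lambda(k)$, since CAD certifies that $D_0>0$, $D_1>D_0$, $k\geq 0$ and the recurrence already force $D_2>D_1$, i.e.\ the trivial invariant ``positive and strictly increasing'' (your $\lambda\equiv 1$) propagates and suffices.
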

\begin{proof}
By \eqref{3.2}, we obtain
\begin{equation}
    \bigtriangleup(6k+4,k)=\sum_{i=k}^{3k+2}\frac{6k+4}{6k+4-i}\binom{6k+4-i}{i}\binom{i}{k}\frac{i-2k-1}{k+1}.
\end{equation}
By using the RISC package fastZeil.m on $k$, we have

\noindent$\scriptsize{\textsf{In[31]:=}}$ \textbf{Zb[}$\boldsymbol{\frac{6k+4}{6k+4-i}}$\textbf{Binomial}$\boldsymbol{[6k+4-i,i]}$\textbf{Binomial}$\boldsymbol{[i, k]\frac{i-2k-1}{k+1},\{i, k,3k+2\}, k, 2]}$

If `2+2k' is a natural number and `2+3k' is no negative integer, then:

\noindent$\scriptsize{\textsf{Out[31]=}}$ \{$72(3+2k)(4+3k)(5+3k)(7+3k)(8+3k)(5+6k)(7+6k)(2640+2681k+671k^2)$SUM$[k]-(2+k)(7+3k)(8+3k)(462369600+2067513120k+3748025842k^2+3537926637k^3+1838345599k^4+499358523k^5+55457479k^6)$ SUM$[1+k]+40(2+k)(3+k)(5+2k)(4+3k)(5+3k)(9+4k)(11+4k)(630+1339k+671k^2)$SUM$[2+k]==0$\}

In other words, we have
\begin{equation*}
    \sum_{j=0}^{2}b_j(k)A(k+j)=0,
\end{equation*}
where
\begin{equation*}
    \begin{split}
    & A(k)=\bigtriangleup(6k+4,k),\\&
    b_0(k)=72(3+2k)(4+3k)(5+3k)(7+3k)(8+3k)(5+6k)(7+6k)(2640\\&+2681k+671k^2),\\&
    b_1(k)=-(2+k)(7+3k)(8+3k)(462369600+2067513120k\\&+3748025842k^2+3537926637k^3+1838345599k^4+499358523k^5\\&+55457479k^6),\\&
    b_2(k)=40(2+k)(3+k)(5+2k)(4+3k)(5+3k)(9+4k)(11+4k)(630\\&+1339k+671k^2).
    \end{split}
\end{equation*}
Using, real variables $D_0,D_1,D_2$ for representing $A(k),A(k+1),A(k+2)$, we can now prove the induction step formula by another application of CAD:

\noindent$\scriptsize{\textsf{In[32]:=}}$ $\boldsymbol{b_0[k\_]:=72(3+2k)(4+3k)(5+3k)(7+3k)(8+3k)(5+6k)(7+6k)}$

$\boldsymbol{(2640+2681k+671k^2)}$

\noindent$\scriptsize{\textsf{In[33]:=}}$ $\boldsymbol{b_1[k\_]:=-(2+k)(7+3k)(8+3k)(462369600+2067513120k+3748025842k^2}$

$\boldsymbol{+3537926637k^3+1838345599k^4+499358523k^5+55457479k^6)}$

\noindent$\scriptsize{\textsf{In[34]:=}}$ $\boldsymbol{b_2[k\_]:=40(2+k)(3+k)(5+2k)(4+3k)(5+3k)(9+4k)(11+4k)(630+1339k+671k^2)}$

\noindent$\scriptsize{\textsf{In[35]:=}}$ \textbf{CylindricalDecomposition[Implies[}$\boldsymbol{D_0>0\&\&D_1>0\&\&k\geq 0\&\&D_1>D_0}$

$\boldsymbol{\&\&b_0[k]D_0+b_1[k]D_1+b_2[k]D_2==0,D_2>D_1], \{k, D_0, D_1, D_2\}]}$

\noindent$\scriptsize{\textsf{Out[35]=}}$ True

Since $A(k)$ is strictly increasing for $k\geq0$, with initial value $A(0)=1$, thus we get the proof. $\Box$
\end{proof}

\begin{proposition}\label{pro3.6}
For any nonnegative integers $k$ when $n\geq 4$,  $\bigtriangleup(n,k)\leq 0$ for $2k\leq n\leq 6k+3$.
\end{proposition}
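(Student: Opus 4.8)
The plan is to follow the template of Proposition~\ref{pro3.3} and Lemmas~\ref{lem3.4}--\ref{lem3.5}, but applied to the \emph{left} branch of $\bigtriangleup(\cdot,k)$. Numerically one sees that, for each fixed $k$, the map $n\mapsto\bigtriangleup(n,k)$ decreases from $n=2k$ down to a minimum at $n=6k+2$ and only then turns upward (it crosses $0$ between $6k+3$ and $6k+4$, which is exactly the content of Proposition~\ref{pro3.3}). So I would prove the estimate in two pieces: strict monotone decrease on the block $2k\le n\le 6k+2$, which together with a negative starting value forces $\bigtriangleup(n,k)<0$ there, and then a single boundary computation at $n=6k+3$.

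First I would pin down the two initial values directly from \eqref{3.2}. Since $JL_{n,k+1}=0$ whenever $k+1>\lfloor n/2\rfloor$, the terminal entries of each row give $\bigtriangleup(2k,k)=-2$ and $\bigtriangleup(2k+1,k)=-(2k+1)$, whence the base case $\Phi(2k,k)=\bigtriangleup(2k+1,k)-\bigtriangleup(2k,k)=1-2k<0$ for every $k\ge1$. Then, exactly as in Lemma~\ref{lem3.4}, I would rewrite the recurrence \eqref{3.1} as $\bigtriangleup(n+2,k)=-\frac{d_0(n,k)}{d_2(n,k)}\bigtriangleup(n,k)-\frac{d_1(n,k)}{d_2(n,k)}\bigtriangleup(n+1,k)$ and run an induction on $n$ whose step is handed to CAD: with real variables $D_0,D_1$ standing for $\bigtriangleup(n,k),\bigtriangleup(n+1,k)$, I would verify
\[
2k\le n\le 6k\ \wedge\ D_0<0\ \wedge\ D_1<0\ \wedge\ D_1<D_0\ \Longrightarrow\ -\tfrac{d_0(n,k)}{d_2(n,k)}D_0-\tfrac{d_1(n,k)}{d_2(n,k)}D_1<D_1 .
\]
With the base case this yields $\Phi(n,k)<0$ for $2k\le n\le 6k+1$, i.e. $\bigtriangleup(\cdot,k)$ is strictly decreasing on $\{2k,\dots,6k+2\}$, so $\bigtriangleup(n,k)\le\bigtriangleup(2k,k)=-2<0$ throughout $2k\le n\le 6k+2$.

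It then remains only to handle the boundary point $n=6k+3$. Here I would imitate Lemma~\ref{lem3.5}: feed the summand of \eqref{3.2} specialised at $n=6k+3$ (summed over $k\le i\le 3k+1$) to fastZeil to obtain a three-term recurrence $\tilde b_0(k)A(k)+\tilde b_1(k)A(k+1)+\tilde b_2(k)A(k+2)=0$ for $A(k):=\bigtriangleup(6k+3,k)$, and then confirm by CAD that $D_0<0\wedge D_1<0\wedge D_1<D_0\wedge(\tilde b_0 D_0+\tilde b_1 D_1+\tilde b_2 D_2=0)\Rightarrow D_2<D_1$; with the starting data $A(0)=\bigtriangleup(3,0)=-1$ and $A(1)=\bigtriangleup(9,1)=-18$ this gives $A(k)<0$ for all $k\ge0$. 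Combining the two pieces proves $\bigtriangleup(n,k)\le0$ on all of $2k\le n\le 6k+3$; the finitely many residual small cases are read off from Table~\ref{tab:my_label}.

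The step I expect to fight with is the CAD verification of the monotonicity induction. On the interval $2k\le n\le 6k$ the factors $(n-3k)$ in $d_0(n,k)$ and $(n-3k-1)$ in $d_2(n,k)$ both change sign near $n=3k$, so the multipliers $-d_0/d_2$ and $-d_1/d_2$ are not sign-definite and the implication above is genuinely a case split rather than a one-line positivity argument. If CAD cannot close it from the bare hypotheses $D_0,D_1<0$ and $D_1<D_0$, the remedy is to strengthen the induction hypothesis with explicit two-sided bounds on the ratio $\bigtriangleup(n+1,k)/\bigtriangleup(n,k)$, in the same spirit as the bounding functions $L(n,k)$, $l(n,k)$, $h(n,k)$ of Lemmas~\ref{lem2.2}--\ref{lem2.3}; alternatively, once Theorem~\ref{thm2.4} is in hand, log-concavity makes the ratio $JL_{n,k+1}/JL_{n,k}$ nonincreasing in $k$, so it would suffice to check $\bigtriangleup(n,k)\le0$ only at the single critical index $k=\lceil (n-3)/6\rceil$ for each $n$, reducing everything to the six diagonal families $\bigtriangleup(6k+j,k)$, $j\in\{-2,\dots,3\}$.
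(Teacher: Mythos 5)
Your boundary computation at $n=6k+3$ coincides with the paper's entire argument for this proposition: the paper ``proves'' Proposition~\ref{pro3.6} only through Theorem~\ref{lem3.7}, which is precisely your fastZeil-plus-CAD treatment of $A(k)=\bigtriangleup(6k+3,k)$ (the paper's $B(k)$; incidentally, your supplying the second initial value $A(1)=-18$ is more careful than the paper, whose CAD step hypothesizes both $D_0<0$ and $D_1<D_0<0$ but quotes only $B(0)=-1$). The interior range $2k\le n\le 6k+2$ is where you depart: the paper is simply silent there, so your monotonicity lemma is an attempt to fill a genuine gap in the paper itself. But your primary route fails concretely, and worse than the ``case split'' you anticipated. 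The coefficient $d_2(n,k)=-(1+n)(n-6k-3)(n-3k-1)(n-2k+2)$ vanishes at the integer point $n=3k+1$, which lies inside your induction range $2k\le n\le 6k$; there the recurrence \eqref{3.1} degenerates to a two-term relation between $\bigtriangleup(3k+1,k)$ and $\bigtriangleup(3k+2,k)$ and does not determine $\bigtriangleup(3k+3,k)$ at all, so the rewritten step $\bigtriangleup(n+2,k)=-\frac{d_0}{d_2}\bigtriangleup(n,k)-\frac{d_1}{d_2}\bigtriangleup(n+1,k)$ is undefined and the induction chain stalls. Moreover, even one step away the bare-cone implication is false as a statement about all reals: at $n=3k+2$ one computes $d_0=2(3k+3)(3k+4)(-3k)<0$, $d_2=3(k+1)(3k+1)(k+4)>0$ and $d_1+d_2=12(k+1)(3k+2)>0$, so the desired conclusion $D_2<D_1$ is equivalent to $-d_0D_0<(d_1+d_2)D_1$; taking $D_1=-1$ and $D_0\to 0^-$ stays inside your hypothesis cone $D_1<D_0<0$ yet violates this, since the left side tends to $0$ while the right side is bounded away from $0$ below. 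Hence your In-line CAD call returns a nontrivial set, not True.

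Your two fallbacks are the salvageable content. The log-concavity reduction is sound: Theorem~\ref{thm2.4} makes $JL_{n,k+1}/JL_{n,k}$ nonincreasing in $k$, so for fixed $n$ it suffices to show $\bigtriangleup(n,k)\le 0$ at the minimal admissible index, reducing Proposition~\ref{pro3.6} to the six diagonal families $\bigtriangleup(6k+j,k)$, $j\in\{-2,\dots,3\}$ — and this appears to be the only way to complete the paper's own argument, which handles just $j=3$; you would still need to run the Zb-plus-CAD machinery for the remaining diagonals (with $j=-2$ free, since $\bigtriangleup(2k,k)=-2$, which you verified correctly). Alternatively, if you insist on the monotonicity route, the degeneracy at $n=3k+1$ can be turned to advantage: there \eqref{3.1} forces the exact relation $\bigtriangleup(3k+2,k)=\frac{3k+2}{k+2}\bigtriangleup(3k+1,k)$ (check: $k=2$ gives $\bigtriangleup(8,2)=2\cdot(-28)=-56$, matching Table~\ref{tab:my_label}), which supplies the missing strict-decrease step precisely at the bad point; combined with strengthened ratio hypotheses away from $n=3k+1$, in the spirit of Lemmas~\ref{lem2.2}--\ref{lem2.3}, your induction could be repaired. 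As written, however, the proposal does not close the interior range, and its failure mode is exactly the one you flagged.
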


\begin{theorem}\label{lem3.7}
 For any nonnegative integers $k$, $\bigtriangleup(6k+3,k)$ is negative.
\end{theorem}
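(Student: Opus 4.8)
The plan is to reuse, almost verbatim, the creative-telescoping-plus-CAD strategy that already handled the companion boundary case $\bigtriangleup(6k+4,k)>0$ in Lemma \ref{lem3.5}. Setting $n=6k+3$ in \eqref{3.2} and using $\lfloor(6k+3)/2\rfloor=3k+1$, I first record the explicit single sum
\begin{equation*}
B(k):=\bigtriangleup(6k+3,k)=\sum_{i=k}^{3k+1}\frac{6k+3}{6k+3-i}\binom{6k+3-i}{i}\binom{i}{k}\frac{i-2k-1}{k+1}.
\end{equation*}
The summand is a proper hypergeometric term in $(i,k)$, so Zeilberger's algorithm (the fastZeil package, applied in the variable $k$, exactly as for $A(k)=\bigtriangleup(6k+4,k)$) will return a linear recurrence $\sum_{j=0}^{J}c_j(k)\,B(k+j)=0$ with polynomial coefficients $c_j(k)$; I expect $J=2$, matching Lemma \ref{lem3.5}.

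The small values $B(0)=JL_{3,1}-JL_{3,0}=-1$, $B(1)=JL_{9,2}-JL_{9,1}=-18$, and $B(2)=JL_{15,3}-JL_{15,2}=-630$ indicate $B(k)<0$ with $|B(k)|$ strictly increasing, so I would pass to $C(k):=-B(k)>0$, which satisfies the same recurrence. The core of the argument is then the induction step: assuming $C(k),C(k+1)>0$ and $C(k+1)>C(k)$, show $C(k+2)>C(k+1)$. After isolating $C(k+2)$ in the recurrence and replacing $C(k),C(k+1)$ by fresh real variables $D_0,D_1$, this becomes a purely algebraic implication over $k\ge 0$ that I would discharge with CylindricalDecomposition, precisely in the style of \textsf{In[35]} in the proof of Lemma \ref{lem3.5}. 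Combined with the base facts $C(0)=1>0$ and $C(1)>C(0)$, the induction gives $C(k)>0$ for all nonnegative integers $k$, i.e.\ $\bigtriangleup(6k+3,k)<0$, as claimed.

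The main obstacle is the boundary behaviour of the sum. Because the upper summation limit $3k+1$ depends on $k$, the telescoping produced by Zeilberger may carry an inhomogeneous boundary contribution (the package already flags this with its ``if $2+2k$ is a natural number'' caveat in Lemma \ref{lem3.5}), so I must check that the certificate really yields a \emph{homogeneous} recurrence for $B(k)$ valid for all $k\ge 0$, and otherwise absorb the boundary term before running the induction. A secondary, purely computational difficulty is that the $c_j(k)$ are likely to be high-degree polynomials (in Lemma \ref{lem3.5} one factor was a degree-$6$ polynomial in $k$), which makes the sign-preserving CAD expensive though guaranteed to terminate; following the spirit of Remark~1, I would keep the algebraic formulation of the induction step unreduced to control the running time.
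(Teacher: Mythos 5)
Your proposal follows essentially the same route as the paper: Zeilberger's algorithm (fastZeil in the variable $k$) applied to the sum for $\bigtriangleup(6k+3,k)$ yields exactly the second-order recurrence $\sum_{j=0}^{2}c_j(k)B(k+j)=0$ you anticipate, and the paper's CAD step (\textsf{In[40]}) is precisely the mirror image of your induction step, assuming $D_0<0$, $D_1<0$, $D_1<D_0$ and concluding $D_2<D_1$, with the initial value $B(0)=-1$. Your sign flip to $C(k)=-B(k)$ is cosmetic, and your explicit check of the second base value $B(1)=-18<B(0)$ is in fact a small point of care the paper glosses over, since the monotone induction genuinely needs two initial values.
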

\pf By \eqref{3.2}, we obtain
\begin{equation}
    \bigtriangleup(6k+3,k)=\sum_{i=k}^{3k+1}\frac{6k+3}{6k+3-i}\binom{6k+3-i}{i}\binom{i}{k}\frac{i-2k-1}{k+1}.
\end{equation}
It is similar to the proof for lemma \ref{lem3.5}, we have

\noindent$\scriptsize{\textsf{In[36]:=}}$ \textbf{Zb[}$\boldsymbol{\frac{6k+3}{6k+3-i}}$\textbf{Binomial}$\boldsymbol{[6k+3-i,i]}$\textbf{Binomial}$\boldsymbol{[i, k]\frac{i-2k-1}{k+1},\{i, k,3k+1\}, k, 2]}$

If `1+2k' is a natural number and `k' is no negative integer, then:

\noindent$\scriptsize{\textsf{Out[36]=}}$ \{$72(3+2k)^2(2+3k)(4+3k)(7+3k)(5+6k)(7+6k)(2640+2681k+671k^2)$SUM$[k]-(2+k)(1+2k)(7+3k)(462369600+2067513120k+3748025842k^2+3537926637k^3+1838345599k^4+499358523k^5+55457479k^6)$SUM$[1+k]+40(2+k)(3+k)(1+2k)(3+2k)(4+3k)(9+4k)(11+4k)(630+1339k+671k^2)$SUM$[2+k]==0$\}

In other words, we have
\begin{equation*}
    \sum_{j=0}^{2}c_{j}(k)B(k+j),
\end{equation*}
where
\begin{equation*}
    \begin{split}
    & B(k)=\bigtriangleup(6k+3,k),\\&
    c_0(k)=72(3+2k)^2(2+3k)(4+3k)(7+3k)(5+6k)(7+6k)(2640+2681k\\&+671k^2),\\&
    c_1(k)=-(2+k)(1+2k)(7+3k)(462369600+2067513120k+\\&3748025842k^2+3537926637k^3+1838345599k^4+499358523k^5\\&+55457479k^6),\\&
    c_2(k)=40(2+k)(3+k)(1+2k)(3+2k)(4+3k)(9+4k)(11+4k)(630\\&+1339k+671k^2).
    \end{split}
\end{equation*}
Using, real variables $D_0,D_1,D_2$ for representing $B(k),B(k+1),B(k+2)$, we can now prove the induction step formula by another application of CAD:

\noindent$\scriptsize{\textsf{In[37]:=}}$ $\boldsymbol{c_0[k\_]:=72(3+2k)^2(2+3k)(4+3k)(7+3k)(5+6k)(7+6k)(2640+2681k+671k^2)}$

\noindent$\scriptsize{\textsf{In[38]:=}}$ $\boldsymbol{c_1[k\_]:=-(2+k)(1+2k)(7+3k)(462369600+2067513120k+3748025842k^2}$

$\boldsymbol{+3537926637k^3+1838345599k^4+499358523k^5+55457479k^6)}$

\noindent$\scriptsize{\textsf{In[39]:=}}$ $\boldsymbol{c_2[k\_]:=40(2+k)(3+k)(1+2k)(3+2k)(4+3k)(9+4k)(11+4k)(630+1339k+671k^2)}$

\noindent$\scriptsize{\textsf{In[40]:=}}$ \textbf{CylindricalDecomposition[Implies[}$\boldsymbol{D_0<0\&\&D_1<0\&\&k\geq 0\&\&D_1<D_0}$

$\boldsymbol{\&\&c_0[k]D_0+c_1[k]D_1+c_2[k]D_2==0,D_2<D_1], \{k, D_0, D_1, D_2\}]}$

\noindent$\scriptsize{\textsf{Out[40]=}}$ True

Then $B(k)$ is strictly decreasing for $k\geq 0$, since the initial value $B(0)=-1$, thus we get the proof.
%

Combine proposition \ref{pro3.3}, proposition \ref{pro3.6} and theorem \ref{lem3.7}, it is clearly that our main theorem is true.

$\mathbf{Remark \ 2}$. All RISC software packages mentioned in this paper are available online at

\centerline{https://caa.risc.jku.at/software}

\end{CJK*}

\begin{thebibliography}{99}

\bibitem{FB1989} F. Brenti, Unimodal, log-concave and P\'{o}lya frequency sequences in combinatorics. Mem. Amer. Math. Soc. (1989)81(413):viii+106.

\bibitem{FB1994} F. Brenti, Log-concave and unimodal sequences in algebra, combinatorics, and geometry: an update, Jerusalem Combinatorics' 93: An International Conference in Combinatorics, May 9-17, 1993, Jerusalem, Israel. Vol. 178.  Amer. Math. Soc. (1994)71每89.

\bibitem{EC1975} G. E. Collins. Quantifier elimination for real closed fields by cylindrical algebraic decomposition. In Automata theory and formal languages (Second GI Conf., Kaiserslautern), volume Vol. 33 of Lecture Notes in Comput. Sci., pages 134每183. Springer, Berlin-New York, 1975.
\bibitem{hou2019} Qing-Hu Hou, Zuo-Ru Zhang, Asymptotic r-log-convexity and P-recursive sequences. J. Symb. Comput. (2019)93: 21-33.
\bibitem{hou2021} Qing-Hu Hou, G. Li, Log-concavity of P-recursive sequences[J]. J. Symb. Comput. (2021) 107: 251-268.
\bibitem{KP2007}  M. Kauers, P. Paule, A computer proof of Moll's log-concavity conjecture. Proc. Amer. Math. Soc., 135(12):3847每3856, 2007.
\bibitem{Mao2023} J. Mao, Y. Pei, The asymptotic log-convexity of Ap$\acute{e}$ry-like numbers[J]. J. Differ. Equations Appli., 2023, 29(8): 799-813.
\bibitem{PM1992} P. Paule,  M. Schorn, A Mathematica version of Zeilberger's algorithm for proving binomial coefficient identities[J]. J. Symb. Comput.(1995) 20(5-6): 673-698.

\bibitem{AR2001} A. Riese,  Fine-tuning Zeilberger＊s algorithm: the methods of automatic filtering and creative substituting. In Garvan, F. G. et al. eds, Symbolic Computation, Number Theory, Special Functions, Physics and Combinatorics, Developments in Mathematics 4, (2001)243每254.
\bibitem{NJ} N.J.A. Sloane, The on-line encyclopedia of integer sequences.

\bibitem{PS1989} R.P. Stanley, Log-concave and unimodal sequences in algebra, combinatorics, and geometry. Ann. New York Acad. Sci 576.1 (1989): 500-535.

\bibitem{Sun1975} Y. Sun, Numerical triangles and several classical sequences. Fibonacci Quart.,43(4):359每370, 2005.

\bibitem{SW1992} H. S. Wilf, D. Zeilberger. An algorithmic proof theory for hypergeometric (ordinary and q) multisum/integral identities. Invent. Math., 108:575每633, 1992.
\bibitem{Zhu2020} B.-X. Zhu,  A generalized Eulerian triangle from staircase tableaux and tree-like tableaux, J. Combin. Theory Ser. A 172 (2020) 105206.
\end{thebibliography}
\end{document}